\tikzstyle{dashed}=[dash pattern=on 3pt off 3pt]
\theoremstyle{definition}
\newtheorem{thm}{Theorem}[section]
\newtheorem{prop}[thm]{Proposition}
\newtheorem{coro}[thm]{Corollary}
\newtheorem{lem}[thm]{Lemma}
\newtheorem{dfn}[thm]{Definition}
\newtheorem{con}[thm]{Condition}
\newtheorem{ex}[thm]{Example}
\newtheorem{rmk}[thm]{Remark}
\providecommand{\lan}{\mathcal{L}}
\providecommand{\M}{\mathcal{M}}
\providecommand{\cl}{\mathcal{C}}
\newcommand{\rel}{\mathbf{r}}
\def\Ind#1#2{#1\setbox0=\hbox{$#1x$}\kern\wd0\hbox to 0pt{\hss$#1\mid$\hss}
\lower.9\ht0\hbox to 0pt{\hss$#1\smile$\hss}\kern\wd0}
\def\Notind#1#2{#1\setbox0=\hbox{$#1x$}\kern\wd0\hbox to 0pt{\mathchardef
\nn="3236\hss$#1\nn$\kern1.4\wd0\hss}\hbox to 0pt{\hss$#1\mid$\hss}\lower.9\ht0
\hbox to 0pt{\hss$#1\smile$\hss}\kern\wd0}
\def\ind{\mathop{\mathpalette\Ind{}}}
\begin{document}

\title{Simplicity of the automorphism groups of some binary homogeneous structures determined by triangle constraints}

\author{Yibei Li}

\date{}

\maketitle

\section*{Abstract}

We study some amalgamation classes introduced by Cherlin in the appendix of \cite{cherlin1998classification} and prove the simplicity of the automorphism groups of the Fra{\"\i}ss{\'e} limits of these classes. We employ the machinery of stationary independence relations used by Tent and Ziegler in \cite{tentziegler2013isometry}.

\section{Introduction}

\subsection{Overview}

Given a relational language $\lan$, a countable $\lan$-structure $\M$ is \emph{homogeneous} if every partial isomorphism between finite substructures of $\M$ extends to an automorphism of $\M$. Some examples include the random graph, the generic $k$-uniform hypergraph, the rationals with linear order etc. Fra{\"\i}ss{\'e}'s Theorem \cite{fraisse1953theorem} provides one way of constructing homogeneous structures by establishing a one-to-one correspondence between such structures and \emph{amalgamation classes} (see Definition \ref{amalgamationclass}). We call the homogeneous structure the \emph{Fra{\"\i}ss{\'e} limit} of the corresponding amalgamation class. For example, the random graph is the Fra{\"\i}ss{\'e} limit of the set of all finite graphs, the Urysohn space is the completion of the Fra{\"\i}ss{\'e} limit of the set of all finite rational-valued metric spaces.

Given finite $\lan$-structures $A,B,C$ where $B \subseteq A,C$, the \emph{free amalgam} of $A,C$ over $B$ is the $\lan$-structure $D$ on the disjoint union of $A,C$ over $B$ and for each relation $R \in \lan$, $R^D=R^A \cup R^C$. An amalgamation class $\cl$ is \emph{free} if it is closed under taking free amalgams, i.e. for $A,B,C \in \cl$ such that $B$ can be embedded in $A,C$, the free amalgam of $A,C$ over $B$ is also in $\cl$. A homogeneous structure is \emph{free} if it is the Fra{\"\i}ss{\'e} limit of a free amalgamation class. The following theorem about free homogeneous structures was proved by Macpherson and Tent \cite{macphersontent2011simplicity} using ideas and results from model theory and topological groups:

\begin{thm}\label{tztheorem}(\cite{macphersontent2011simplicity})
Let $\M$ be a countable free homogeneous relational structure. Suppose $Aut(\M) \neq Sym(\M)$ and $Aut(\M)$ is transitive on $\M$. Then $Aut(\M)$ is simple.
\end{thm}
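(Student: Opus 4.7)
The plan is to adapt the stationary independence machinery of \cite{tentziegler2013isometry} to the discrete free-amalgamation setting. On a free homogeneous $\M$, the natural candidate for a stationary independence relation (SIR) over a finite parameter set $C$ is given by: $A \ind_C B$ iff the substructure on $A \cup B \cup C$ is isomorphic to the free amalgam of $\LD AC \RD$ and $\LD BC \RD$ over $\LD C \RD$, equivalently no relation of $\lan$ holds across $A \setminus C$ and $B \setminus C$. First I would verify the SIR axioms: invariance, monotonicity, symmetry and transitivity are essentially immediate from the definition of free amalgam; existence, namely that for any $A,B,C$ one can find $A' \equiv_C A$ with $A' \ind_C B$, follows because the underlying amalgamation class is closed under free amalgams; and stationarity, namely that two independent extensions realising the same type over $C$ are conjugate over $CB$, follows from homogeneity of $\M$.

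Given the SIR, I would invoke the general Tent--Ziegler simplicity criterion: if $Aut(\M)$ is transitive on $\M$ and admits a non-degenerate SIR, then for any $1 \neq g \in Aut(\M)$ and any target $h \in Aut(\M)$, the element $h$ can be written as a product of a bounded number of conjugates of $g$ and $g^{-1}$. The core combinatorial step is a back-and-forth on increasing finite substructures of $\M$: at each stage one positions a conjugate of $g$ so that it performs the next required move, uses the SIR to glue partial isomorphisms while preserving all previously established independence, and then invokes homogeneity to extend to a full automorphism. Iterating, one realises $h$ as the desired product, which shows that the normal closure of $g$ is all of $Aut(\M)$ and hence that $Aut(\M)$ is simple.

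The hypothesis $Aut(\M) \neq Sym(\M)$ enters precisely at the point where one needs the SIR to be non-trivial: it guarantees the existence of some relation $R \in \lan$ whose interpretation is non-trivial, so that the free amalgam strictly refines the disjoint union and independent configurations are genuinely distinguished from dependent ones. Transitivity of $Aut(\M)$ on $\M$ is used at the base case of the back-and-forth to place a conjugate of $g$ so that it acts non-trivially at a prescribed point.

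The main obstacle will be the bookkeeping of the back-and-forth: at each finite stage one must simultaneously preserve all prior independence constraints, ensure that the partial map currently being built is compatible with the independence conditions that allow later extension, and arrange that the product of the conjugates converges in the topology of pointwise convergence to exactly $h$. The technical heart is the interplay between stationarity (used to realign independent extensions of a given type) and existence (used to make room for the next conjugate to act), and this is where the argument of \cite{macphersontent2011simplicity} puts in most of its work; the rest of the proof is verifying that the free-amalgamation SIR described above meets the hypotheses of that abstract criterion.
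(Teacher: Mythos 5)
First, a remark on what you are being compared against: the paper does not prove this theorem itself — it is quoted from \cite{macphersontent2011simplicity}, and the proof the paper has in mind (see the opening of Section 4) is the one via stationary independence relations from \cite{tentziegler2013isometry}, which Sections 3--5 of the paper then generalise to the semi-free setting. Your overall architecture — define a stationary independence relation by free amalgamation, verify the axioms, then feed a suitable automorphism into the Tent--Ziegler conjugacy theorem — is the right route and matches this; your verification of the SIR axioms is essentially the specialisation of Theorem \ref{sir} to the free case.

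However, there is a genuine gap where you ``invoke the general Tent--Ziegler simplicity criterion.'' The criterion actually available (Theorem \ref{tzmm} here, Theorem 5.4 of \cite{tentziegler2013isometry}) is not of the form ``transitive plus non-degenerate SIR implies every $h$ is a bounded product of conjugates of any $g \neq 1$''; it applies only to automorphisms $g$ that \emph{move almost maximally}, and an arbitrary non-trivial $g$ need not do so (it may, for instance, fix a large substructure pointwise). The missing, and hardest, step is to produce inside the normal closure of an arbitrary $g \neq 1$ an element that does move almost maximally; in the paper's generalisation this is Theorem \ref{main}, realised as $[k,[h,g]]$ via Lemma \ref{colourrange} and Theorem \ref{movealmost}, and it rests on Proposition \ref{nofix} (a non-trivial automorphism cannot fix the realisation set of a non-algebraic type pointwise), which in turn uses the intersection property / weak elimination of imaginaries (Theorem \ref{setequal}) and, crucially, the availability of at least two distinct relations to separate points. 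This is also where $Aut(\M) \neq Sym(\M)$ genuinely enters — not merely to make the SIR ``non-degenerate'' as you suggest, but to supply two relations $R_1, R_2$ with which one builds configurations distinguishing $gx$ from $y$. Indeed, if $Aut(\M) = Sym(\M)$ then every configuration is free, every automorphism vacuously satisfies $a \ind_X ga$, and yet $Sym(\M)$ is not simple; so the criterion as you state it cannot be correct. Your back-and-forth paragraph describes the proof of the conjugacy theorem itself, which presupposes moving almost maximally, rather than the reduction from an arbitrary non-trivial $g$ to such an element, and that reduction is the real content of \cite{macphersontent2011simplicity}.
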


Tent and Ziegler \cite{tentziegler2013isometry} generalised the theorem to a weaker notion of free homogeneous structures, namely a homogeneous structure with a stationary independence relation. They applied their method to the Urysohn space, which is not free, but has a local stationary independence relation (see Definition \ref{sirdfn}). They also used their approach to show that the isometry group of the bounded Urysohn space is simple in \cite{tentziegler2013bounded}. In this paper, we apply their method to the Fra{\"\i}ss{\'e} limits of all amalgamation classes given by Cherlin in the appendix of \cite{cherlin1998classification}. Cherlin's classes satisfy a weaker property, called \emph{semi-free amalgamation}, defined as follows:

\begin{dfn}
Given a relational language $\lan$, let $\cl$ be an amalgamation class of finite $\lan$-structures. We say $\mathcal{C}$ is a \emph{semi-free amalgamation class} if there exists $\lan ' \subsetneq \lan$ such that for any finite structures $A,B,C \in \mathcal{C}$ and embeddings $f_1:B \rightarrow A, f_2:B\rightarrow C$, there exist $D\in \mathcal{C}$ and embeddings $g_1:A \rightarrow D, g_2:C\rightarrow D$ such that $g_1f_1(B)=g_2f_2(B)=g_1(A) \cap g_2(C)$ and for any $a \in g_1(A)\setminus g_1 f_1(B), c\in g_2(C)\setminus g_2 f_2(B)$, if $a,c$ are related by some $R \in \lan$, then $R \in \lan'$. We call $\lan'$ the \emph{set of solutions}.
\end{dfn}

\begin{center}
\begin{tikzpicture}[line cap=round,line join=round,>=triangle 45,x=0.7499999999999991cm,y=0.7492917847025493cm]
\clip(-5.3228,-2.3684) rectangle (11.9172,4.6516);
\draw [line width=0.8pt] (-2.428,1.448) ellipse (0.5788134414472413cm and 0.578266875402438cm);
\draw [->,line width=0.8pt] (-1.5,2.004) -- (0.3,2.7);
\draw [->,line width=0.8pt] (-1.5,1.) -- (0.3,0.3);
\draw [rotate around={0.:(2.42,2.816)},line width=0.8pt] (2.42,2.816) ellipse (1.378102486419107cm and 0.6769783619889579cm);
\draw [line width=0.8pt,fill=black,pattern=north east lines,pattern color=black] (1.352,2.796) ellipse (0.4253895273746164cm and 0.42498783754706754cm);
\draw [line width=0.8pt,fill=black,pattern=north east lines,pattern color=black] (1.38,0.24) ellipse (0.3976697750646883cm and 0.39729426064064266cm);
\draw [rotate around={-0.4305174793771619:(2.509,0.24)},line width=0.8pt] (2.509,0.24) ellipse (1.3884888711422085cm and 0.7016297092862699cm);
\draw [rotate around={49.1861783969235:(7.9766,2.3298)},line width=0.8pt] (7.9766,2.3298) ellipse (1.3894778106444732cm and 0.7470362260149427cm);
\draw [rotate around={-49.343421910637666:(8.0986,0.2308)},line width=0.8pt] (8.0986,0.2308) ellipse (1.4778506880218971cm and 0.7639351833433878cm);
\draw [line width=0.8pt,fill=black,pattern=north east lines,pattern color=black] (7.25,1.312) ellipse (0.4074039764165288cm and 0.4070192701120754cm);
\draw [->,line width=0.8pt] (4.5,2.7) -- (6.5,2.);
\draw [->,line width=0.8pt] (4.5,0.3) -- (6.5,1.);
\draw (-2.5028,1.6316) node[anchor=north west] {B};
\draw (1.2172,2.9516) node[anchor=north west] {B};
\draw (1.1772,0.3916) node[anchor=north west] {B};
\draw (2.5172,2.9716) node[anchor=north west] {A};
\draw (2.5972,0.3316) node[anchor=north west] {C};
\draw (7.2972,1.4916) node[anchor=north west] {B};
\draw (9.7372,2.9516) node[anchor=north west] {$D$};
\draw [rotate around={-88.91498238050282:(8.5472,1.2916)},line width=0.8pt] (8.5472,1.2916) ellipse (2.5058836399741433cm and 1.534013063518586cm);
\draw [line width=0.8pt,dash pattern=on 4pt off 4pt] (8.9772,2.8916)-- (8.98,-0.2484);
\draw (8.8372,3.5516) node[anchor=north west] {a};
\draw (8.8972,-0.4884) node[anchor=north west] {c};
\draw (8.8672,1.7316) node[anchor=north west] {$R \in \mathcal{L} '$};
\begin{scriptsize}
\draw[color=black] (-0.7728,2.6516) node {$f_1$};
\draw[color=black] (-0.6928,1.1116) node {$f_2$};
\draw[color=black] (5.3472,2.7316) node {$g_1$};
\draw[color=black] (5.4872,1.0916) node {$g_2$};
\draw [fill=black] (8.9772,2.8916) circle (2.5pt);
\draw [fill=black] (8.98,-0.2484) circle (2.5pt);
\end{scriptsize}
\end{tikzpicture}
\end{center}

We can view a free amalgamation class as a special case of semi-free amalgamation classes. 

In this paper, we will only consider a language $\lan$ consisting of binary, symmetric and irreflexive relations and classes of complete $\lan$-structures. We say an $\lan$-structure $A$ is \emph{complete} if every two distinct elements $a,b \in A$ are related by exactly one relation. We denote this relation by $\rel (a,b)$.

In the appendix of \cite{cherlin1998classification}, Cherlin identified 28 semi-free amalgamation classes of complete structures for languages consisting of three and four relations, specified by triangle constraints, which are defined as the following:

\begin{dfn}\label{forbs}
An $\lan$-structure is a \emph{triangle} if it is a complete structure on three points. Let $S$ be a set of triangles. We define $Forb_c(S)$ to be the set of all complete structures that do not embed any triangle from $S$. We call $S$ \emph{the set of forbidden triangles} of $Forb_c(S)$.
\end{dfn}
We can think of the structures in $Forb_c(S)$ as complete edge-coloured graphs that do not embed some coloured triangles by taking the elements of the structures as vertices and the relations as colours. Throughout this paper, $S$ is assumed to be a set of forbidden triangles such that the corresponding $Forb_c(S)$ is a semi-free amalgamation class. Then we can take its Fra{\"\i}ss{\'e} limit and denote it by $\M_S$.

Our main goal is to prove the simplicity of the automorphism groups of $\M_S$ for $S$ listed in the appendix of  \cite{cherlin1998classification} as well as some general cases. Motivated by Cherlin's examples, we will define a special semi-free amalgamation class, called \emph{prioritised semi-free amalgamation class} (see Definition \ref{priority}). We will prove, in Section 3, that if $Forb_c(S)$ forms a prioritised semi-free amalgamation class, then we can find a stationary independence relation on $\M_S$. We then apply Tent and Ziegler's method in Section 4 and 5 to show the simplicity of $Aut(\M_S)$ for $S$ satisfying some extra conditions (Condition \ref{maincond}).  The main result of this paper is stated in Corollary \ref{maincoro}. In Section 6.1, we will define two conditions on $S$ and show that if $S$ satisfies one of the conditions, $Forb_c(S)$ forms a prioritised semi-free amalgamation class and satisfies Condition \ref{maincond}. We observe that 27 of the 28 examples in Cherlin's list satisfy one of the conditions. These conditions also apply to some general cases where the language consists of more than four relations. Hence, we can apply the main result to Cherlin's examples as well as some general cases. The remaining case, \# 26, needs some extra care and is dealt with in Section 6.2. Combining Section 6.1 and Section 6.2, we prove the following theorem.

\begin{thm}\label{maincherlin}
With the above notation, let $\M_S$ be one of the countable homogeneous structures listed in the appendix of \cite{cherlin1998classification} (see Table 1) and $\M_S$ as defined above. Then $Aut(\M_S)$ is simple.
\end{thm}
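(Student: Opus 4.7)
The plan is to deduce the theorem from a more general simplicity criterion and then verify that each entry in Cherlin's list satisfies the hypotheses of that criterion. First I would set up, in Section~3, the notion of a \emph{prioritised semi-free amalgamation class} and, for such a class, construct a stationary independence relation on the Fra\"iss\'e limit $\M_S$ by specifying, for any two disjoint finite tuples over a common parameter set, a canonical semi-free amalgam selected by the priority. The role of the priority is to make the chosen amalgam well-defined and independent of the order of the pieces, which is exactly what stationarity demands. Once such a relation is in place, Sections~4 and 5 will adapt the Tent--Ziegler machinery of \cite{tentziegler2013isometry} to the semi-free setting and show that $Aut(\M_S)$ is simple whenever $S$ additionally satisfies Condition~\ref{maincond}; this is Corollary~\ref{maincoro}.

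With that engine in place, the proof of Theorem~\ref{maincherlin} reduces to verifying, for each of Cherlin's 28 examples, that $Forb_c(S)$ is a prioritised semi-free amalgamation class and that $S$ satisfies Condition~\ref{maincond}. Rather than treating the 28 cases one at a time, I would isolate two sufficient combinatorial conditions on $S$, formulated purely in terms of which triangles are forbidden and which relations are declared to be solutions. The reduction then has two parts: first show that each of these two conditions implies both the prioritised semi-free amalgamation property and Condition~\ref{maincond}; then match the list in Cherlin's appendix against these conditions, observing that 27 of the 28 entries satisfy one of them and that the conditions also cover some examples over languages with more than four relations.

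The final step is the exceptional item~\#26, whose triangle constraints fall outside both combinatorial conditions. Here I would build the prioritised structure directly from the specific forbidden triangles of \#26, verify by hand the amalgamation and stationarity statements on the corresponding $\M_S$, and then check Condition~\ref{maincond} so that Corollary~\ref{maincoro} applies. Combining the generic argument for the 27 uniform cases with this hand-tailored argument for \#26 yields simplicity of $Aut(\M_S)$ in every case in the appendix.

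The main technical obstacle will not be the Tent--Ziegler step itself, which largely reuses an existing machine, but rather the definition of the priority: it must be canonical enough that the induced independence relation is truly stationary even though the amalgams are only semi-free, so that independent extensions over the same base always produce isomorphic structures. Case~\#26 is expected to be the principal combinatorial nuisance, because the role of some relations as solutions depends on the local configuration rather than on the ambient language, forcing a more delicate choice of priority.
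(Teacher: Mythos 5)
Your overall architecture coincides with the paper's: prioritised semi-free amalgamation yields a stationary independence relation (Theorem \ref{sir}), the Tent--Ziegler machinery plus Condition \ref{maincond} gives Corollary \ref{maincoro}, two combinatorial conditions (Conditions \ref{condition1} and \ref{condition2}) cover 27 of the 28 entries, and \#26 is treated by hand. Up to that point the proposal is faithful.

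The gap is in your treatment of \#26. You propose to ``check Condition \ref{maincond} so that Corollary \ref{maincoro} applies,'' but part (ii) of Condition \ref{maincond} is \emph{false} for \#26: with $\lan'=\{G,R,Y\}$ and order $G>R>Y$ there is an explicit configuration (Remark \ref{deletenotholdfor26}) with $a \ind_{bb'} c$ and $\rel(a,b)\in\lan'$ but $a \nind_{b'} c$, essentially because $Y$ is a solution colour yet $RYY$ is forbidden. So Corollary \ref{maincoro} cannot be invoked for \#26 as stated, and the difficulty is not merely ``a more delicate choice of priority.'' What is actually needed is a weaker substitute for part (ii) --- namely that $a\ind_{bB} c$ implies $a\ind_B c$ when \emph{both} $\rel(a,b)$ and $\rel(b,c)$ lie in $\lan'$ (Lemma \ref{deletefor26}) --- together with a re-examination of the two places where part (ii) is used: in Theorem \ref{setequal} one checks that $\rel(c'',c')=\rel(c,c')\in\lan'$ automatically, and in Theorem \ref{movealmost} one must additionally choose the witness $b$ with $\rel(b,gb)\in\lan'$ so that $\rel(a,c)\in\lan'$ and the weakened lemma applies. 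A secondary point your sketch does not surface is that for \#26 the solution set itself must be enlarged to $\{R,G,Y\}$ (Lemma \ref{no26semifree}); no two-colour solution set makes $Forb_c(S)$ semi-free, contrary to what one might read off from Cherlin's appendix. Without these adjustments the final step of your plan does not go through.
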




\subsection{Background}

In this section, we introduce some concepts in model theory for readers not familar with them.

We first fix a first-order relational language $\lan$, which is specified by a set of relation symbols $\{R_i : i \in I \}$ and each $R_i$ has arity $r_i \in \mathbb{N}$. Then an $\lan$-structure is a set $A$ together with a subset $R^{A}_i \subseteq A^{r_i}$ for each $i \in I$ representing the structure on $A$. In this paper, since the graphs we are working on are all undirected and loopless, the relations are always binary, i.e. $r_i=2$ for all $i$, symmetric and irreflexive. An $\lan$-structure $B$ is a \emph{substructure} of $A$ if $B \subseteq A$ and $R^{A}_i \cap B^{r_i}=R^{B}_i$ for each $i \in I$. For example, if $A$ is a graph, then a substructure of $A$ is an induced subgraph. 

Let $A,B$ be finite substructures of $\M$, we use the notation $AB$ to denote the substructure of $\M$ on the underlying set $A \cup B$. We also simplify the notation $\{a\}B$ to $aB$. Let $G=Aut(\M)$ and denote the pointwise stabiliser of $B$ by $G_{(B)}$ and the setwise stabiliser of $B$ by $G_{\{B\}}$. For a homogeneous structure $\M$, the model theoretic notion of an \emph{$n$-type over $B$} corresponds to a $G_{(B)}$-orbit of an $n$-tuple. For $a \in \M^n$, the \emph{type of $a$ over $B$}, denoted by $tp(a/B)$, is the type over $B$ whose corresponding $G_{(B)}$-orbit contains $a$. 
So, we may use $tp(a/B)$ to denote its corresponding $G_{(B)}$-orbit. Note that $a,a'$ have the same type over $B$ if they lie in the same $G_{(B)}$-orbit, i.e. there exists an automorphism of $\M$ that takes $a$ to $a'$ and fixes $B$ pointwise. We say $a$ \emph{realise} some type $p$ over $B$ if it lies in the corresponding $G_{(B)}$-orbit. We say a type is \emph{algebraic} if its set of realisations is finite, and \emph{non-algebraic} otherwise. In all of our examples, $tp(a/B)$ is algebraic if and only if $a \in B$.

In this paper, the structures we study are all homogeneous and are constructed from amalgamation classes, which are defined as follows:

\begin{dfn}\label{amalgamationclass}
A set of finite $\lan$-structures $\mathcal{C}$ is an \emph{amalgamation class} if it satisfies the following conditions:
\begin{enumerate}[(i)]
\item it is closed under isomorphism and there are countably many isomorphism types.
\item if $A \in \cl$ and $B$ is a substructure of $A$, then $B \in \cl$.
\item (Joint embedding property) if $A,B \in \cl$, then there exist $C \in \cl$ and embeddings $f_1: A \rightarrow C, f_2: B \rightarrow C$
\item (Amalgamation property) if $A,B,C \in \cl$ and $f_1:B \rightarrow A, f_2:B\rightarrow C$ are embeddings, then there exist $D\in \mathcal{C}$ and embeddings $g_1:A \rightarrow D, g_2:C\rightarrow D$ such that $g_1f_1=g_2f_2$
\end{enumerate}
\end{dfn}
 
In the amalgamation property, we can take the embeddings to be inclusion maps. Hence in this paper, when we have an amalgamation class $\cl$ and say that $B$ is a substructure of $A,C$ for $A,B,C \in \cl$, the precise meaning is that there are embeddings from $B$ to $A,C$ as in the amalgamation property. Since $\lan$ is relational, it is enough to check all $A,B,C \in \cl$ such that $|A \setminus B|= |C \setminus B| =1$ in verifying the amalgamation property.

Fra{\"\i}ss{\'e}'s theorem gives a one-to-one correspondence between countable homogeneous structures and amalgamation classes. The theorem states that given an amalgamation class $\cl$, we can construct a countable homogeneous structure $\M$, whose class of isomorphism types of finite substructures is $\cl$. Conversely, the set of all isomorphism types of finite substructures of an homogeneous structure $\M$ forms an amalgamation class, called the \emph{age} of $\M$. By the construction in the proof of Fra{\"\i}ss{\'e}'s theorem, the homogeneous structure satisfies the \emph{Extension Property}: if $A \subseteq \M$ and $f: A \rightarrow B$ is an embedding where $B \in \cl$, then there is an embedding $g: B \rightarrow \M$ such that $g(f(a))=a$ for all $a \in A$.

\subsection{Some Remarks}

We make some remarks about overlaps between this work and \cite{prague2017ramsey}, which studies metrically homogeneous graphs. For any undirected graph, we can put a metric on the graph by defining the distance between any two vertices to be the length of the shortest path between them. A graph is metrically homogeneous if it is homogeneous as a metric space. Cherlin \cite{cherlin2011two} produced a list of such graphs, which is conjectured to be complete. 

In \cite{cherlin2011two}, Cherlin noted that some of the graphs $\M_S$ we study in this paper can be regarded as metrically homogeneous graphs by interpreting the relations as distances. Moreover, some of the graphs can be interpreted as metrically homogeneous graphs in different ways. More precisely, in Table 1 in Section 2, cases $\# 1$ to $\# 8$, $\# 15$ to $\# 18$ and $\# 22$ to $\# 24$ in the list are metrically homogeneous if we let $Y=4, R=3,G=2,X=1$; cases $\# 22, \# 23$ and $\# 25$ produce different metrically homogeneous graphs if we let $X=4, R=3, G=2, Y=1$. This interpretation also works for cases $\# 21$ and $\# 26$. Case $\# 26$ also has interpretation $G=4,R=3,Y=2,X=1$. The full list can be found in \cite{cherlin2011two}. \cite{cherlin2019book} is a more recent work by Cherlin that provides a more thorough study on these graphs.

\cite{prague2017ramsey} showed that all metrically homogeneous graphs with some exceptions in Cherlin's catalogue have the Ramsey property by finding a $\textquoteleft
$completion algorithm', which also provides stationary independence relations on those graphs. This work overlaps with some of the results in this paper. The completion algorithm is somewhat similar to our prioritised semi-free amalgamation process, defined in the next section. The paper \cite{hmn2017generalisedmetricspace} generalised these the results of \cite{prague2017ramsey} to Conant's generalised metric spaces \cite{conant2015generalisedmetricspace}. There is also ongoing work of Evans, Hubi{\v{c}}ka, Kone{\v{c}}n{\'y} and the author of this paper (paper in preparation) to apply Tent and Ziegler's method on these generalised metric spaces. 

The author has also generalised Tent and Ziegler's method so that the symmetry axiom in the stationary independence relation is no longer required (paper in preparation). Hence we can also apply the method to the directed graphs in \cite{cherlin1998classification}, which is also refered to as 2-multi-tournaments in \cite{cherlin2019book}. Note that the proofs in Section 3-5 d not depend on symmetry (except the proof of symmetry of the stationary independence relation in Theorem \ref{sir}).

\section{Prioritised Semi-Free Amalgamation classes}

In the appendix of \cite{cherlin1998classification}, Cherlin produced a list of semi-free amalgamation classes for languages consisting of three and four binary relations, excluding free amalgamation classes and those with a non-trivial equivalence relation on the vertices. We provide Cherlin's list below. Each entry in the list is a set of forbidden triangles $S$ and we study $Forb_c(S)$, the set of all complete structures that do not embed any triangle from $S$, as defined in Definition \ref{forbs}. To avoid any confusion in the notation, we change some of the letters used in Cherlin's original list:

\begin{table}[h]

Language: $\{ R, G, X\}$

$\# 1$ \hspace{1cm} RXX  GGX  XXX

Language: $\{ R, G, X, Y\}$

$\# 1$ \hspace{1cm} RXX  GYX  YXX

$\# 2$ \hspace{1cm} RXX  GYX  YXX  XXX

$\# 3$ \hspace{1cm} RXX  GYX  YXX  YYX

$\# 4$ \hspace{1cm} RXX  GYX  YXX  YYY

$\# 5$ \hspace{1cm} RXX  GYX  YXX  YYX  XXX

$\# 6$ \hspace{1cm} RXX  GYX  YXX  XXX  YYY

$\# 7$ \hspace{1cm} RXX  GYX  YXX  YYX  YYY

$\# 8$ \hspace{1cm} RXX  GYX  YXX  YYX  YYY  XXX

$\# 9$ \hspace{1cm} RXX  GYX  YYX  XXX

$\# 10$ \hspace{0.8cm} RXX  GYX  YYX  XXX  YYY

$\# 11$ \hspace{0.8cm} RXX  GGX  YXX  XXX

$\# 12$ \hspace{0.8cm} RXX  GGX  YXX  XXX  YYX

$\# 13$ \hspace{0.8cm} RXX  GGX  YXX  XXX  YYY

$\# 14$ \hspace{0.8cm} RXX  GGX  YXX  YYX  XXX YYY

$\# 15$ \hspace{0.8cm} RXX  GYX  GGX  YXX  XXX

$\# 16$ \hspace{0.8cm} RXX  GYX  GGX  YXX  XXX  YYX

$\# 17$ \hspace{0.8cm} RXX  GYX  GGX  YXX  XXX  YYY

$\# 18$ \hspace{0.8cm} RXX  GYX  GGX  YXX  YYX  XXX  YYY

$\# 19$ \hspace{0.8cm} RXX  GYX  GGX  YYX  XXX

$\# 20$ \hspace{0.8cm} RXX  GYX  GGX  YYX  XXX  YYY

$\# 21$ \hspace{0.8cm} RXX  RYY  GYX  YYX  XXX

$\# 22$ \hspace{0.8cm} RXX  RYY  GYX  YYX  YXX

$\# 23$ \hspace{0.8cm} RXX  RYY  GYX  YYX  YXX  XXX

$\# 24$ \hspace{0.8cm} RXX  RYY  GYX  YXX  XXX  YYY

$\# 25$ \hspace{0.8cm} RXX  RYY  GYX  YXX  YYX  XXX  YYY
 
$\# 26$ \hspace{0.8cm} RRX  RXX  RYY  GYX GXX  YYX  XXX 

$\# 27$ \hspace{0.8cm} RRY  RRX  GYX GXX  GYY  YYX  YXX  XXX  YYY

  \caption{List of forbidden triangles in the appendix of \cite{cherlin1998classification}}
\end{table}

\hspace{0.5cm}

It can be checked that any amalgamation can be completed by a proper subset $\lan' \subset \lan$. Hence, $Forb_c(S)$ is a semi-free amalgamation class for each $S$ in the list. It can be shown that we may take $\lan'=\{R,G\}$ in all the cases except for \# 26. Note that in \cite{cherlin1998classification}, Cherlin stated that we can find a set of solutions $\lan'$ consisting of two relations for each $Forb_c(S)$. However, this is not possible for \# 26. We will prove that we require $\lan'=\{R,G,Y\}$ for \# 26 in Section 6.

We now look at some examples of these amalgamation classes.
 
\begin{ex}
Taking \# 11 as an example, to check that we may take $\lan '=\{R,G\}$, we want to show that any amalgamation can be completed by either $R$ or $G$, i.e. there does not exist an amalgamation $aB$, $cB$ over some finite set $B$ such that $(a,c)$ cannot be coloured by $R$ or $G$. Suppose we have such an amalgamation. Then there exist $b_1,b_2 \in B$ such that $ab_1c$ forbids $\rel (a,c) = R$ and $ab_2c$ forbids $\rel (a,c) = G$. Since the only forbidden triangles containing $R,G$ are $RXX, GGX$, we may assume without loss of generality that $\rel (a,b_1)=\rel  (b_1,c) = X$ and $\rel (a,b_2) = G, \rel (b_2,c) = X$, as shown in the graph below. However, we cannot find a colour for $b_1b_2$ without creating a forbidden triangle as $b_1ab_2$ forbids $\rel (b_1,b_2) = G$ because of the forbidden triangle $GGX$ and $b_1cb_2$ forbids $\rel (b_1,b_2)\in \{R,Y,X\}$ because of the forbidden triangles $RXX, YXX, XXX$. Therefore, such an amalgamation does not exist. Thus, we may take $\lan '=\{R,G\}$. Similarly, we can prove the same statement for the other cases.

\begin{center}
 \begin{tikzpicture}
    \tikzstyle{every node}=[draw,circle,fill=white,minimum size=4pt,
                            inner sep=0pt]
  \draw (0,0) node (1) [label=left:$a$] {}
     ++(315:2.88cm) node (2) [label=right:$c$]{} 
   ++(180:2cm) node (3) [label=below:$b_1$] {}
     ++ (240:1cm) node (4) [label=below:$b_2$] {};

\draw [thick,dash pattern={on 7pt off 2pt on 1pt off 3pt}] (1) -- (2) ;
\draw (1) -- (3)  node[draw=none,fill=none,font=\scriptsize,midway,right] {X};
\draw (2) -- (3)  node[draw=none,fill=none,font=\scriptsize,midway,above] {X};
\draw (4) -- (3)  node[draw=none,fill=none,font=\scriptsize,midway,right] {};
\draw (1) -- (4)  node[draw=none,fill=none,font=\scriptsize,midway,left] {G};
\draw (2) -- (4)  node[draw=none,fill=none,font=\scriptsize,midway,below] {X};

\end{tikzpicture}
\end{center}
\end{ex}

Also note that given an amalgamation, it is possible to have different completions. For example, if we amalgamate two $R$-coloured edges $ab, bc$ over the vertex $b$ in \# 11, we can colour $(a,c)$ by any relation as there is no forbidden triangle containing $RR$. However, in order to find a stationary independence relation on $\M_S$ that Tent and Ziegler used in \cite{tentziegler2013isometry}, we want to find a $\textquoteleft
$unique' way of amalgamating. In order to do this, we put a linear ordering on $\lan '$.

\begin{dfn}\label{priority}
Let $\lan$ be a language consisting of $n$ binary, symmetric, irreflexive relations. Let $ \lan' \subsetneq \lan$ and suppose $\lan'=\{ R_1,...,R_m\}$. Suppose $\lan'$ is ordered as $R_1 >\cdots >R_m$. For every $A,B,C \in Forb_c(S)$, where $B\subseteq A,C$, define the following way to amalgamate $A$ and $C$ over $B$: for each $a\in A \setminus B, c \in C \setminus B$, first check whether $abc$ form a forbidden triangle for any $b \in B$ if $(a,c) \in R_1$. If $B=\emptyset$ or colouring $(a,c)$ by $R_1$ does not form any forbidden triangle, we let $\rel (a,c) = R_1$. Otherwise, we check the same thing for $(a,c) \in R_2$ and so on so forth. In other word, $\rel (a,c)= R_i$ where $i$ is the smallest possible integer such that $\rel (a,b) \rel (b,c) R_i \notin S$ for any $b \in B$.

Denote the resulting amalgamation by $A \otimes _B C$. If $A\otimes _B C$ does not embed any forbidden triangle, i.e. $A\otimes _B C \in Forb_c(S)$, we call it the \emph{prioritised semi-free amalgamation} of $A, C$ over $B$. If for any $A,B,C \in Forb_c(S)$ where $B \subseteq A,C$, $A \otimes _B C \in Forb_c(S)$, then we say $Forb_c(S)$ is a \emph{prioritised semi-free amalgamation class} with respect to the given ordering on $\lan'$.
\end{dfn}

\begin{ex}

In \# 11, we may let $R>G$. Then $\rel (a,c) \neq R$ in the graph below since otherwise we would have a triangle of $RXX$, so $\rel (a,c) = G$.

\begin{center}
 \begin{tikzpicture}
    \tikzstyle{every node}=[draw,circle,fill=white,minimum size=4pt,
                            inner sep=0pt]
  \draw (0,0) node (1) [label=left:$a$] {}
     ++(315:2.88cm) node (2) [label=right:$c$]{} 
   ++(180:2cm) node (3) {}
     ++ (240:1cm) node (4) {};

\draw [thick,dash pattern={on 7pt off 2pt on 1pt off 3pt}] (1) -- (2) ;
\draw (1) -- (3)  node[draw=none,fill=none,font=\scriptsize,midway,right] {X};
\draw (2) -- (3)  node[draw=none,fill=none,font=\scriptsize,midway,above] {X};
\draw (4) -- (3)  node[draw=none,fill=none,font=\scriptsize,midway,right] {R};
\draw (1) -- (4)  node[draw=none,fill=none,font=\scriptsize,midway,left] {R};
\draw (2) -- (4)  node[draw=none,fill=none,font=\scriptsize,midway,below] {G};

\end{tikzpicture}
\end{center}

\end{ex}

It is possible that the resulting amalgamation is not in $Forb_c(S)$. For example, in \#11 with order $G>R$, to complete the following amalgamation, we let both $(a_1,c), (a_2,c) \in G$, but we would get a forbidden triangle $GGX$ at $a_1a_2c$.

\begin{center}
 \begin{tikzpicture}
    \tikzstyle{every node}=[draw,circle,fill=white,minimum size=4pt,
                            inner sep=0pt]
  \draw (0,0) node (1) [label=right:$a_1$] {}
     ++(315:2.88cm) node (2) [label=right:$c$]{} 
   ++(180:2cm) node (3)  [label=below:$b$] {}
   ++(110:1.72cm) node (4) [label=left:$a_2$] {};

\draw [thick,dash pattern={on 7pt off 2pt on 1pt off 3pt}] (4) -- (2) ;
\draw [thick,dash pattern={on 7pt off 2pt on 1pt off 3pt}] (1) -- (2) ;
\draw (1) -- (3)  node[draw=none,fill=none,font=\scriptsize,midway,right] {R};
\draw (2) -- (3)  node[draw=none,fill=none,font=\scriptsize,midway,below] {R};
\draw (4) -- (3)  node[draw=none,fill=none,font=\scriptsize,midway,left] {G};
\draw (1) -- (4)  node[draw=none,fill=none,font=\scriptsize,midway,above] {X};
\end{tikzpicture}
\end{center}

Note that in $A \otimes _B C$, a forbidden triangle can only appear in $AC \setminus B$ as we ensure by construction that there is no forbidden triangle with a vertex from $B$.

\section{Stationary Independence Relations}

Following \cite{tentziegler2013isometry}, we consider a ternary relation among finite substructures of a homogeneous structure, called a \emph{stationary independence relation}. We will show in this section that the Fra{\"\i}ss{\'e} limit of a prioritised semi-free amalgamation class has a stationary independence relation.

\begin{dfn}\label{sirdfn}

Let $\M$ be a homogeneous structure and suppose $A\ind _B C$ is a ternary relation among finite substructure $A,B,C$ of $\M$. We say that $\ind$ is a \emph{stationary independence relation} if the following axioms are statisfied:

\begin{enumerate}[(i)]
\item Invariance: for any $g \in Aut(\M)$, if $A \ind _B C$, then $ gA \ind _{gB} gC$
\item Monotonicity: $A \ind _B CD \Rightarrow A \ind _B C$, $A \ind _{BC} D$
\item Transitivity: $A \ind _B C$, $A \ind _{BC} D \Rightarrow A \ind _B D $
\item Symmetry: $A \ind _B C \Rightarrow C \ind _B A$  
\item Existence: If $p$ is an $n$-type over $B$ and $C$ is a finite set, then $p$ has a realisation $a$ such that $a \ind _B C$.
\item Stationarity: If $a$ and $a'$ are $n$-tuples that have the same type over $B$ and are both independent from $C$ over $B$, then $a$ and $a'$ have the same type over $BC$.
\end{enumerate}
We say \emph{$A$ is independent from $C$ over $B$} if $A \ind _B C$.
\end{dfn}

\begin{rmk}
It can be shown from the above axioms that 
\[A \ind _B C \Leftrightarrow AB \ind _B C \Leftrightarrow A\ind_B BC\]
Hence, we can assume without loss of generality that $B\subseteq A,C$ whenever $A\ind _B C$ for arbitrary $A,B,C$.
\end{rmk}


\begin{thm}\label{sir}
Let $S$ be a set of forbidden triangles such that $Forb_c(S)$ is a prioritised semi-free amalgamation class. Let $\M_S$ be the Fra{\"\i}ss{\'e} limit of $Forb_c(S)$. For any finite substructure $ A,B, C$ of $\M_S$, let $A\ind _B C$ if $ABC=AB \otimes _B BC$ where $AB \otimes _B BC$ is the prioritised semi-free amalgamation defined in Definition \ref{priority}. Then $\ind$ is a stationary independence relation on $\M_S$.
\end{thm}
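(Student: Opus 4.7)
The plan is to verify each of the six axioms of a stationary independence relation directly from the explicit description of $\ind$ in terms of the prioritised amalgamation $\otimes$. The two driving ingredients are the defining hypothesis on $Forb_c(S)$ (every prioritised amalgamation lies in $Forb_c(S)$) and the Extension Property of $\M_S$.

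Invariance is immediate, since the prioritised rule refers only to the ambient relations and the set $S$, both of which every automorphism preserves. Symmetry uses that $\lan$ is symmetric and $S$ is closed under vertex permutations, so the same rule produces the same relation whether the pair $(a,c)$ is read as ``$a$ from $A$, $c$ from $C$'' or the reverse. For Existence I would pick any realisation $a_0$ of the given $n$-type $p$ over $B$, form $a_0 B \otimes_B BC \in Forb_c(S)$, and invoke the Extension Property to embed it into $\M_S$ fixing $BC$; the image of $a_0$ is a realisation of $p$ independent from $C$ over $B$. Stationarity follows because the relation $\rel(a,c)$ assigned by the rule for $c\in C\setminus B$ depends only on the tuple $(\rel(a,b))_{b\in B}$ and on the fixed $B$-to-$C$ edges, so two tuples $a,a'$ with the same type over $B$ that are both independent from $C$ induce the same structure on $BC$-extensions, and homogeneity supplies an automorphism fixing $BC$ pointwise and carrying $a$ to $a'$.

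Monotonicity I would split in two. The implication $A\ind_B CD \Rightarrow A\ind_B C$ is a restriction argument, because the prioritised rule for edges from $A$ to $C$ sees only $B$ and the $B$-to-$C$ edges. For $A\ind_B CD \Rightarrow A\ind_{BC}D$, fix $a\in A\setminus B$ and $d\in D\setminus BC$: passing from base $B$ to base $BC$ only strengthens the constraints, so the $BC$-winner has priority no higher than the $B$-winner; conversely, since $ABCD\in Forb_c(S)$, the actual relation $\rel(a,d)$, which by hypothesis is the $B$-winner, creates no forbidden triangle with any $c\in C$ either, hence remains a legal and thus top-priority candidate at base $BC$.

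The main obstacle is Transitivity. The strategy is to upgrade $A\ind_B C$ together with $A\ind_{BC}D$ to the stronger assertion $A\ind_B CD$, from which the Monotonicity clause just proved will deliver $A\ind_B D$. Concretely, for $a\in A\setminus B$ and $d\in D\setminus BC$ one must show that the actual $\rel(a,d)$ inside $ABCD$ coincides with the $B$-winner; by $A\ind_{BC}D$ it equals the $BC$-winner, which is of no higher priority than the $B$-winner, so the content is the reverse comparison. Here I would introduce the auxiliary amalgamation $X = AB\otimes_B BCD$, which belongs to $Forb_c(S)$ by the class hypothesis. Inside $X$, the edge $(a,c)$ agrees with the actual $\rel(a,c)$ by $A\ind_B C$, the edge $(c,d)$ is inherited from $BCD$, and the edge $(a,d)$ is the $B$-winner. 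Absence of a forbidden triangle on $(a,c,d)$ in $X$ then says exactly that the $B$-winner for $(a,d)$ is compatible with every $c\in C\setminus B$, so it is a legal candidate at base $BC$ and must equal the $BC$-winner. This pivot, which exploits the closure property of prioritised semi-free amalgamation classes to unlock transitivity, is the key step; once it is in place, all six axioms are verified and the theorem follows.
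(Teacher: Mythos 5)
Your proposal is correct and follows essentially the same route as the paper: each of the six axioms is verified directly, and both monotonicity and transitivity reduce to the observation that a mismatch between the $B$-winner and the larger-base winner for a pair $(a,d)$ would produce a forbidden triangle inside a prioritised amalgam that the class hypothesis guarantees lies in $Forb_c(S)$. Your auxiliary structure $X = AB\otimes_B BCD$ in the transitivity step is exactly the paper's $A\otimes_B D$, so the detour through $A\ind_B CD$ followed by monotonicity is only a cosmetic reorganisation of the paper's direct comparison of $A\otimes_B D$ with $AC\otimes_C D$.
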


\begin{proof}
\underline{Invariance:} Given $B \subseteq A,C \subseteq \M_S$, the prioritised semi-free amalgamation of $A,C$ over $B$ is dependent only on the types of $A,B,C$ and homogeneity of $\M_S$.

\underline{Monotonicity:} Let $B \subseteq A \subseteq \M_S$ and $ B\subseteq C \subseteq D \subseteq \M_S$. Suppose $A \ind _B D$, i.e. $AD=A\otimes _B D$, we want to show $A\ind _B C$ and $AC \ind _C D$. By construction, we have $A \otimes _B C \subseteq A \otimes _B D=AD$, so $A \otimes _B C=ABC$, i.e. $A\ind _B C$.

 
To show that $AC \ind_C D$, for any $a \in A \setminus C,d\in D \setminus C$, we want to show that $(a,d)$ is completed by the same relation in both $A \otimes _B D$ and $AC\otimes _C D$. Suppose $(a,d)$ is completed by some $R_i \in \lan'$ in $A \otimes _B D$ and $(a,d) \notin R_i$ in $AC\otimes _C D$. Then there exists $c \in C\setminus B$ that forbids $(a,d)$ to be coloured b $R_i$, i.e. $R_i \rel (a,c) \rel (c,d)$ forms a forbidden triangle. However, $ (a,c), (c,d)$ have the same colours in $A \otimes _B D$ as in $A \otimes _C D$, i.e. it would also be a forbidden triangle in $A \otimes _B D$, a contradiction.

\begin{tikzpicture}[line cap=round,line join=round,>=triangle 45,x=1cm,y=1cm]
\clip(-6.,-1.) rectangle (7.,4.);
  \draw (0,0) node (1) [draw,circle,fill=white,minimum size=4pt,
                            inner sep=0pt,label=below:$b$] {};
  \draw (1.48,0.632) node (2) [draw,circle,fill=white,minimum size=4pt,
                            inner sep=0pt,label=right:$c$]{};
  \draw (2.338,2.26) node (3) [draw,circle,fill=white,minimum size=4pt,
                            inner sep=0pt,label=right:$d$]{}; 
  \draw (-1.68,1.49) node (4) [draw,circle,fill=white,minimum size=4pt,
                            inner sep=0pt,label=left:$a$]{}; 
\draw (1) -- (2)  node[draw=none,fill=none,font=\scriptsize,midway,below] {};
\draw (2) -- (3)  node[draw=none,fill=none,font=\scriptsize,midway,right] {};
\draw [dash pattern={on 7pt off 2pt on 1pt off 3pt}] (1) -- (3) ;
\draw (1) -- (4)  node[draw=none,fill=none,font=\scriptsize,midway,left] {};
\draw (4) -- (2)  node[draw=none,fill=none,font=\scriptsize,midway,right] {};
\draw (4) -- (3)  node[draw=none,fill=none,font=\scriptsize,midway,above] {};
\draw [line width=0.8pt] (0.,0.) circle (0.5cm);
\draw [rotate around={-45.47753180927141:(-0.8333012851325438,0.8136896716187254)},line width=0.8pt] (-0.8333012851325438,0.8136896716187254) ellipse (1.8608157008316764cm and 0.83313683507826cm);
\draw [rotate around={41.10943160487273:(0.671073186784725,0.47701744109573757)},line width=0.8pt] (0.671073186784725,0.47701744109573757) ellipse (1.5907259683603852cm and 0.898717957730353cm);
\draw [rotate around={45.68415840134765:(1.091073186784725,1.0570174410957374)},line width=0.8pt] (1.091073186784725,1.0570174410957374) ellipse (2.3968951528675992cm and 1.295311909750448cm);
\draw (-1.71,0.852) node[anchor=north west] {A};
\draw (0.886,0.26) node[anchor=north west] {C};
\draw (1.958,1.0) node[anchor=north west] {D};
\end{tikzpicture}

\underline{Transitivity:} Let $B \subseteq A \subseteq \M_S, B\subseteq C \subseteq D \subseteq \M_S$. Suppose $A\ind _B C$ and $A\ind _C D$, i.e. $AC=A \otimes _B C$, $AD=AC \otimes _C D$. We want to show $A \ind _B D$, i.e. $A\otimes _B D =AD=AC \otimes _C D$. We already have $A \otimes _B C =AC \subset AD =AC \otimes _C D$, so we only need to show that, for any $a \in A \setminus B, d \in D \setminus B$, $(a,d)$ is coloured by the same relation in both $A \otimes _B D$ and $AC \otimes _C D$. Suppose $(a,d)$ is completed by some $R_{i}\in \lan' $ in $A \otimes_B D$ and $(a,d) \notin R_i$ in $AC \otimes _C D$. This implies that there exists $c\in C \setminus B$ that forbids $(a,d)$ to be coloured by $R_i$, i.e. $R_{i} \rel (a,c) \rel (c,d)$ is a forbidden triangle. This would be a forbidden triangle in $A\otimes_B D$, a contradiction.

\underline{Symmetry:} The symmetry of the independence relation follows from the symmetry of the relations in the language.

\underline{Existence:}  Let $p$ be a type over $B$ and $C$ any finite set. Let $a$ be a realisation of $p$, we can embed $BC$ into $aB \otimes _B BC$. Then by the extention property, we may assume $aB \otimes _B BC \subseteq \M_S$ and $a \ind _B C$.

\underline{Stationarity:} Suppose $a,a'$ have the same type over $B$ and are both independent from $C$ over $B$. Then $aB \otimes _B BC$ is isomorphic to $a'B \otimes_B BC$ since the relations between $a,a'$ and any $ c\in C$ depend only on the relations between $aB, a'B$ and $cB$ respectively. Hence, $aBC=aB \otimes _B BC$ is isomorphic to $a'B \otimes_B C=a'BC$, which implies $a,a'$ have the same type over $BC$.
\end{proof}

Therefore, we have shown that if $Forb_c(S)$ forms a prioritised semi-free amalgamation class, then there is a stationary independence relation on its Fra{\"\i}ss{\'e} limit $\M_S$.

\section{Preparatory Result}

For Section 4 and Section 5, we will only consider $S$ such that $Forb_c(S)$ forms a prioritised semi-free amalgamation class as defined in Definition \ref{priority}. $\M$ in this section is always a relational homogeneous structure.

Theorem \ref{tztheorem} was proved in \cite{tentziegler2013isometry}, which uses the result from \cite{macphersontent2011simplicity} that a non-trivial automorphism of a free homogeneous structure does not fix the set of realisations of any non-algebraic type pointwise. In this section, we will prove the same statement for $\M_S$, where $S$ satisfies the following condition.

\begin{con}\label{maincond}
Let $S$ be a set of forbidden triangles. Assume that
\begin{enumerate}[(i)]
\item $S$ does not contain any triangle involving $R_1R_1$ or $R_1 R_2$.
\item Let $a,b, c \in \M_S$ and $B \subseteq \M_S$ such that $a \ind_{bB} c$. If $\rel (a,b) \in \lan'$, we have $a \ind _B c$.
\end{enumerate}
\end{con}
Note that satisfying Condition \ref{maincond} does not guarantee that $Forb_c(S)$ forms a prioritised semi-free amalgamation class.

For readers familiar with model theory, the idea in this section comes from \emph{imaginary elements}. Since the proof here does not directly involve them, I refer interested readers to Section 16.4 and 16.5 in \cite{poizat2012course}. We will show that our structure $\M_S$ has the \emph{intersection property}, defined in the following. Note that for our structure $\M_S$, the intersection property is equivalent to having \emph{weak elimination of imaginaries} by Theorem 16.17 in \cite{poizat2012course} since the algebraic closure is trivial in $\M_S$. We will use this property of $\M_S$ to prove that any non-trivial automorphism of $\M_S$ does not fix the set of realisations of any non-algebraic type pointwise.

\begin{dfn}
Let $\M$ be a homogeneous structure and $G=Aut(\M)$. We say that $\M$ has the \emph{intersection property}, if for all finite subsets $A,B \subseteq \M$,
\[ \langle G_{(A)}, G_{(B)} \rangle =G _{(A \cap B)}. \]
\end{dfn}

Note that if $g \in \langle G_{(A)}, G_{(B)} \rangle$, then $g$ fixes $A\cap B$ pointwise. So, we always have $\langle G_{(A)}, G_{(B)}\rangle$$\leq G _{(A \cap B)}$. 

\begin{prop}\label{intersectiononlyone}
Let $\M$ be a homogeneous structure and $G=Aut(\M)$. Suppose 
\begin{align}\label{homointersection}
\langle G_{(A)}, G_{(B)} \rangle =G _{(A \cap B)}
\end{align}
for all finite $A,B \subseteq \M$ such that $|A \setminus B|=|B \setminus A|=1$. Then (\ref{homointersection}) holds for all finite subsets $A,B \subseteq \M$. In other words, to prove the intersection property for a relational homogeneous structure $\M$, it is enough to show (\ref{homointersection}) for finite $A,B \subseteq \M$ such that $|A \setminus B|=|B \setminus A|=1$. 
\end{prop}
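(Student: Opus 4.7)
The plan is induction on $n = |A \triangle B| = |A\setminus B| + |B\setminus A|$. The cases $n \leq 2$ either reduce to the hypothesis (when $|A\setminus B| = |B\setminus A| = 1$) or are immediate: whenever one of $A, B$ is contained in the other, one fixator already contains the other and $A\cap B$ equals the smaller set, so $\langle G_{(A)}, G_{(B)}\rangle = G_{(A\cap B)}$ follows trivially.

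For the inductive step, I assume the result for all pairs with symmetric difference smaller than $n$, where $n \geq 3$. Without loss of generality $|A\setminus B|\geq 2$; otherwise I swap the roles of $A$ and $B$. I fix any $a\in A\setminus B$ and write $C = A\cap B$. The idea is to bridge the pair $(A,B)$ through the intermediate set $C\cup\{a\}$ via two applications of the induction hypothesis. First, the pair $(A,\ B\cup\{a\})$ has intersection $C\cup\{a\}$ and symmetric difference $n-1$, yielding
\[
\langle G_{(A)},\, G_{(B\cup\{a\})}\rangle = G_{(C\cup\{a\})}.
\]
Second, the pair $(C\cup\{a\},\ B)$ has intersection $C$ (since $a\notin B$) and symmetric difference $1 + |B\setminus A|$, which is strictly less than $n$ precisely because $|A\setminus B|\geq 2$, so
\[
\langle G_{(C\cup\{a\})},\, G_{(B)}\rangle = G_{(C)}.
\]

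Combining the two then amounts to the observation that $G_{(B\cup\{a\})}\subseteq G_{(B)}$, so enlarging the generating set on the right by $G_{(B\cup\{a\})}$ does not change the group:
\[
\langle G_{(A)}, G_{(B)}\rangle \;=\; \langle G_{(A)},\, G_{(B\cup\{a\})},\, G_{(B)}\rangle \;=\; \langle G_{(C\cup\{a\})},\, G_{(B)}\rangle \;=\; G_{(C)},
\]
where the middle equality uses the first application of the induction hypothesis and the last uses the second, closing the induction. The only point that needs real care is the bookkeeping of symmetric differences: picking $a$ from $A\setminus B$ (rather than from $C$) is essential, so that passing from $B$ to $B\cup\{a\}$ strictly reduces the symmetric difference in the first application, while the symmetric difference in the second, $1+|B\setminus A|$, is forced to be strictly smaller than $n$ by the assumption $|A\setminus B|\geq 2$. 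This is the main (and essentially the only) obstacle, and it is overcome by the WLOG choice together with the choice of $a$.
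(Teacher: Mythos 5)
Your proof is correct and follows essentially the same route as the paper: induction on $|A\setminus B|+|B\setminus A|$, bridging through the intermediate sets $B\cup\{a\}$ and $(A\cap B)\cup\{a\}$ for a chosen $a\in A\setminus B$, exactly as in the paper's argument (your $C\cup\{a\}$ is the paper's $A'$ and your $B\cup\{a\}$ is its $B'$). You are in fact slightly more careful than the paper in spelling out the trivial base cases where one set contains the other, which are implicitly needed in the second application of the induction hypothesis when $B\setminus A=\emptyset$.
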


\begin{proof}
We prove this by induction on $n=|A \setminus B|+|B \setminus A|$. The induction base is given by the assumption.

Let $n \geq 3$ and assume $|A \setminus B| \geq 2$. Pick $a \in A \setminus B$ and let $A'=(A \cap B) \cup \{ a \}$ and $B'=B \cup \{a \}$. Then,
\[\langle G_{(A)}, G_{(B)} \rangle \geq \langle G_{(A)}, G_{(B')},G_{(B)} \rangle .
 \]
By inductive hypothesis, we have
\[ \langle G_{(A)},G_{(B')} \rangle =G_{(A')} .
\]
Hence,
\[\langle G_{(A)}, G_{(B)} \rangle \geq \langle G_{(A')},G_{(B)} \rangle . \]
Again, by inductive hypothesis,
\[\langle G_{(A)}, G_{(B)} \rangle \geq \langle G_{(A')},G_{(B)} \rangle =G_{(A \cap B)}  .\]

Since $\langle G_{(A)}, G_{(B)} \rangle \leq G _{(A \cap B)}$, we have $\langle G_{(A)}, G_{(B)} \rangle =G _{(A \cap B)}$.
\end{proof}

\begin{lem}\label{intersectionlemma}
Let $\M$ be a homogeneous structure with a stationary independence relation $\ind$ and $G=Aut(\M)$. If $A \ind _{A \cap B} B$, then $\langle G_{(A)}, G_{(B)} \rangle =G _{(A \cap B)}$.
\end{lem}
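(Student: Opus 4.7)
Set $C = A \cap B$ and $K = \langle G_{(A)}, G_{(B)} \rangle$. The inclusion $K \le G_{(C)}$ is immediate. Since $G_{(A)}$ is an open subgroup of $G = Aut(\M)$, so is $K$, and any open subgroup of a topological group is closed. Hence it suffices to show $K$ is dense in $G_{(C)}$: for every $g \in G_{(C)}$ and every finite $F \subseteq \M$, I produce some $h \in K$ with $h|_F = g|_F$. By enlarging, one may assume $F \supseteq A \cup B$.

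\textbf{Reduction to $F = A \cup B$.} Assume the base case: some $h \in K$ satisfies $h|_{A \cup B} = g|_{A \cup B}$. I extend the agreement one point at a time. If $h_n \in K$ satisfies $h_n|_{F_n} = g|_{F_n}$ for some finite $F_n \supseteq A \cup B$, and $f \in \M$ is a new point, then the partial map fixing $F_n$ and sending $f$ to $h_n^{-1}g(f)$ is the restriction of the automorphism $h_n^{-1}g$, hence a partial isomorphism. By homogeneity it extends to some $h' \in G_{(F_n)}$, and since $F_n \supseteq A \cup B$ we have $G_{(F_n)} \subseteq G_{(A)} \cap G_{(B)} \subseteq K$. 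Thus $h_{n+1} := h_n h' \in K$ agrees with $g$ on $F_n \cup \{f\}$.

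\textbf{Base case via a generic intermediate.} By the existence axiom pick $B^* \subseteq \M$ realising $tp(B/C)$ with $B^* \ind_C A \cup g(A)$; monotonicity and symmetry then yield $B^* \ind_C A$ and $B^* \ind_C g(A)$. Now apply stationarity three times. First, $B$ and $B^*$ both realise $tp(B/C)$ and both are independent from $A$ over $C$, so stationarity gives them the same type over $A$, producing $k_1 \in G_{(A)}$ with $k_1(B) = B^*$ as ordered tuples. Second, $A$ and $g(A)$ both realise $tp(A/C)$ (since $g \in G_{(C)}$) and are both independent from $B^*$ over $C$, so stationarity produces $k_2 \in G_{(B^*)}$ with $k_2|_A = g|_A$. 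Third, $B^*$ and $g(B)$ both realise $tp(B/C)$ and are both independent from $g(A)$ over $C$ (using invariance on $A \ind_C B$), so stationarity produces $k_3 \in G_{(g(A))}$ sending $k_1(b)$ to $g(b)$ for each $b \in B$. A direct calculation shows $h := k_3 k_2 k_1$ satisfies $h|_A = g|_A$ and $h|_B = g|_B$.

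\textbf{Main obstacle: showing $h \in K$.} The crux is that each $k_i$ lies in $K$, which I obtain by cascading conjugation. Clearly $k_1 \in G_{(A)} \subseteq K$. Since $k_1$ carries $B$ setwise onto $B^*$, we have $G_{(B^*)} = k_1 G_{(B)} k_1^{-1} \subseteq K$, hence $k_2 \in K$. Since $k_2$ carries $A$ setwise onto $g(A)$, we have $G_{(g(A))} = k_2 G_{(A)} k_2^{-1} \subseteq K$, hence $k_3 \in K$. The heart of the argument is precisely this engineering of the three ``centres'' $A, B^*, g(A)$ so that each successive stabiliser needed is a conjugate of one of the original generators by an element already committed to $K$; the hypothesis $A \ind_C B$ combined with the genericity of $B^*$ provided by the existence axiom is what makes stationarity applicable at each of the three steps.
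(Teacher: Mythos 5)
Your proof is correct, but it takes a genuinely different route from the paper's. The paper works on the $A$-side: it picks, by existence, a copy $A'$ realising $tp(A/B)$ with $A' \ind_B B \cup gB$, notes $G_{(A')}=kG_{(A)}k^{-1}$ for some $k\in G_{(B)}$, and then uses invariance, \emph{transitivity}, monotonicity and a single application of stationarity to produce $h\in G_{(A')}$ with $hB=gB$, whence $g\in hG_{(B)}\le \langle G_{(A)},G_{(B)}\rangle$. You instead work on the $B$-side, choosing $B^*$ realising $tp(B/C)$ with $B^*\ind_C A\cup gA$, and run a three-step cascade ($k_1\in G_{(A)}$, $k_2\in G_{(B^*)}$, $k_3\in G_{(gA)}$) with three applications of stationarity but \emph{no} use of transitivity; each successive stabiliser is a conjugate of a generator by an element already in $K$, which is the same conjugation trick the paper uses once. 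Your version costs more bookkeeping but shows the lemma needs only invariance, monotonicity, symmetry, existence and stationarity. One piece of your argument is redundant: once you have $h\in K$ with $h|_{A\cup B}=g|_{A\cup B}$, you are done immediately, since $h^{-1}g$ fixes $A\cup B$ pointwise and so lies in $G_{(A\cup B)}\le G_{(A)}\le K$, giving $g=h(h^{-1}g)\in K$. The whole topological frame (openness and closedness of $K$, density in $G_{(C)}$, and the point-by-point extension beyond $A\cup B$) can be deleted; in fact your own extension step already contains this observation, since it rests on $G_{(F_n)}\subseteq G_{(A)}\cap G_{(B)}\subseteq K$.
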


\begin{proof}

Let $g \in G _{(A \cap B)}$. By the existence axiom of $\ind$, there exists $A'$ having the same type as $A$ over $B$ such that 
\[ A' \ind _{B} B \cup gB .
\]

Since $\M$ is homogeneous and $tp(A/B)=tp(A'/B)$, there is $k \in G_{(B)}$ such that $kA =A'$. Hence $G_{(A')} =kG_{(A)} k^{-1}$ and 
\[\langle G_{(A)}, G_{(B)} \rangle \geq \langle G_{(A')},G_{(B)} \rangle .
\]

By invariance, we have $kA \ind _{k(A\cap B)} kB$, i.e.
\[ A' \ind _{A \cap B} B .\]

By transitivity on the above two independence relations, we have 
\[ A' \ind _{A \cap B} B \cup gB .
\]

By monotonicity, we have $ A' \ind _{A \cap B} B $ and $A' \ind _{A \cap B} gB$. We also have $tp(B/A\cap B)=tp(gB/ A \cap B)$. So, by stationarity, $tp (B/ A')= tp(gB /A')$, i.e. there is $h \in G_{(A')}$ such that $h B=gB$. Then $g \in hG_{(B)}$ and hence, $g \in  \langle G_{(A')},G_{(B)} \rangle \leq \langle G_{(A)}, G_{(B)} \rangle$. Therefore, $G _{(A \cap B)} \leq \langle G_{(A)}, G_{(B)} \rangle $.

Since $\langle G_{(A)}, G_{(B)} \rangle \leq G _{(A \cap B)}$, we have the required result.
\end{proof}

Now let $S$ be a set of forbidden triangles satisfying Condition \ref{maincond} and $Forb_c(S)$ be the set of all finite $\lan$-structures that do not embed any triangle from $S$ as defined in Definition \ref{forbs}. Let $\M_S$ be the Fra{\"\i}ss{\'e} limit of $Forb_c(S)$ and $G=Aut(\M_S)$. For any $ A,B, C \subseteq \M_S$, let $A\ind _B C$ if $ABC=AB \otimes _B BC$ where $AB \otimes _B BC$ is the prioritised semi-free amalgamation defined in Definition \ref{priority}. The main result of this section is:

\begin{thm}\label{setequal}
Let $S$ be a set of forbidden triangles satisfying Condition \ref{maincond}. For any $a,c \in \M_S$ and finite subset $B \subseteq \M_S$, 
\begin{align}\label{intersectionequal}
\langle G_{(aB)}, G_{(cB)} \rangle =G _{(B)} .
\end{align}
Hence, by Proposition \ref{intersectiononlyone}, $\M_S$ has the intersection property, and thus weak elimination of imaginaries.
\end{thm}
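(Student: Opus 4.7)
The plan is to reduce, via Proposition \ref{intersectiononlyone}, to the one-element case: fix $a, c \in \M_S \setminus B$ with $a \neq c$, and prove $G_{(B)} \leq \langle G_{(aB)}, G_{(cB)} \rangle$ (the reverse inclusion is immediate). In the easy sub-case $a \ind_B c$, Lemma \ref{intersectionlemma} applied directly to $aB$ and $cB$ (which intersect exactly in $B$) closes the argument at once.

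For the general case I would introduce, via the existence axiom, an auxiliary element $a' \in \M_S$ realising $tp(a/B)$ with $a' \ind_B ac$. Lemma \ref{intersectionlemma} then gives $\langle G_{(a'B)}, G_{(cB)} \rangle = G_{(B)}$, and homogeneity produces some $h_0 \in G_{(B)}$ with $h_0(a) = a'$, so that $G_{(a'B)} = h_0 G_{(aB)} h_0^{-1}$. Thus it is enough to exhibit such an $h_0$ inside $\langle G_{(aB)}, G_{(cB)} \rangle$. To produce $h_0$ I propose a sandwich: construct $m \in \M_S$ with $tp(m/cB) = tp(a/cB)$ and $tp(m/aB) = tp(a'/aB)$; homogeneity then supplies $h_1 \in G_{(cB)}$ sending $a \mapsto m$ and $h_2 \in G_{(aB)}$ sending $m \mapsto a'$, and $h_0 := h_2 h_1$ does the job.

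The construction of $m$ proceeds in two moves: first realise a copy of $a$ over $cB$, so the $tp(\cdot/cB)$ part is already correct, and then extend, via the existence axiom, to a realisation with $m \ind_{cB} a$. When $\rel(a, c) \in \lan'$, Condition \ref{maincond}(ii) applies to yield $m \ind_B a$; since $\rel(a', a)$ is computed from the same priority amalgamation over $B$ (using monotonicity from $a' \ind_B ac$ together with $\rel(a', b) = \rel(a, b)$), one obtains $\rel(m, a) = \rel(a', a)$, and hence $tp(m/aB) = tp(a'/aB)$ as required.

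The main obstacle is the case $\rel(a, c) \notin \lan'$, where Condition \ref{maincond}(ii) cannot be invoked with $b = c$ to collapse $cB$ back to $B$. Here I would introduce an auxiliary pivot $d \in \M_S$ with $\rel(d, x) = R_1$ for every $x \in \{a, c\} \cup B$; such a $d$ exists by the extension property, since Condition \ref{maincond}(i) rules out any forbidden triangle containing two $R_1$-edges and the type so described is therefore consistent. Because $\rel(d, a)$ and $\rel(d, c)$ both lie in $\lan'$, the single sandwich through $m$ can be decomposed into a short chain of sandwiches routed through $d$, in each of which the "through" relation is $R_1 \in \lan'$ and so falls under the case already treated. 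Concatenating the intermediate $h_1$'s and $h_2$'s gives the desired $h_0 \in \langle G_{(aB)}, G_{(cB)} \rangle$, and Proposition \ref{intersectiononlyone} then upgrades \eqref{intersectionequal} to the full intersection property, yielding weak elimination of imaginaries.
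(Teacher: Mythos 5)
Your reduction to producing an element $h_0 \in \langle G_{(aB)}, G_{(cB)}\rangle \cap G_{(B)}$ carrying $a$ to a copy $a'$ of $tp(a/B)$ with $a' \ind_B c$ is sound, and the sandwich argument does settle the case $\rel(a,c) \in \lan'$: there $\rel(m,c)=\rel(a,c)\in\lan'$, so Condition \ref{maincond}(ii) upgrades $m \ind_{cB} a$ to $m \ind_B a$, and stationarity gives $tp(m/aB)=tp(a'/aB)$ as you claim. The gap is the case $\rel(a,c)\notin\lan'$. The pivot $d$ certainly exists, but it buys you nothing: any sandwich factor you are entitled to use lies in $G_{(cB)}$ or $G_{(aB)}$ (or a conjugate of these by elements already obtained), so the intermediate point $m=h_1(a)$ with $h_1\in G_{(cB)}$ necessarily satisfies $tp(m/cB)=tp(a/cB)$, hence $\rel(m,c)=\rel(a,c)\notin\lan'$. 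The ``through'' relation cannot be rerouted to $R_1$, because $d$ does not appear in the base of any stabiliser you have access to --- $G_{(dB)}\leq\langle G_{(aB)},G_{(cB)}\rangle$ is essentially the statement you are trying to prove. As written, ``a short chain of sandwiches routed through $d$'' is a hope, not an argument, and I do not see how to complete it.

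The repair --- and this is how the paper's proof proceeds --- is to \emph{manufacture} the $\lan'$-edge rather than rely on the given one. Take $c'$ realising $tp(c/aB)$ with $c'\ind_{aB}c$; since $c'$ and $c$ lie on opposite sides of a prioritised semi-free amalgamation over $aB$, the edge $(c',c)$ is automatically coloured from $\lan'$, regardless of $\rel(a,c)$. Conjugating by $k\in G_{(aB)}$ with $kc=c'$ gives $G_{(c'B)}=kG_{(cB)}k^{-1}\leq\langle G_{(aB)},G_{(cB)}\rangle$. Now take $c''$ realising $tp(c/c'B)$ with $c''\ind_{c'B}c$; Condition \ref{maincond}(ii) applies with pivot $c'$ (since $\rel(c'',c')=\rel(c,c')\in\lan'$) to give $c''\ind_B c$, and Lemma \ref{intersectionlemma} applied to $c''B$ and $cB$ finishes, since $G_{(c''B)}$ is a conjugate of $G_{(cB)}$ by an element of $G_{(c'B)}$. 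No case distinction on $\rel(a,c)$ is ever needed; your first case survives as a (correct but redundant) special case, and the pivot-$d$ paragraph should be replaced by a two-step conjugation of this kind.
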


\begin{proof}
By existence, we can find $c'$ realising $tp(c/aB)$ such that $ c' \ind _{aB} c$. Then there exists $k \in G_{(aB)}$ such that $kc=c'$ and hence, $G_{(c'B)}=kG_{(cB)}k^{-1}$. So we have 
\[ \langle G_{(aB)}, G_{(cB)} \rangle \geq \langle G_{(c'B)}, G_{(cB)} \rangle .
\]

We can also find $c''$ realising $tp(c/c'B)$ such that $c'' \ind _{c'B} c$. Then there exists $h \in G_{(c'B)}$ such that $hc=c''$ and hence, $G_{(c''B)}=hG_{(cB)}h^{-1}$. So we have 
\[ \langle G_{(c'B)}, G_{(cB)} \rangle \geq \langle G_{(c''B)}, G_{(cB)} \rangle .
\]

Since $ c' \ind _{aB} c$, we have $\rel (c',c) \in \lan'$. By part (ii) of Condition \ref{maincond}, we can obtain  $c'' \ind _{B} c$ from  $c'' \ind _{c'B} c$.

Then by Lemma \ref{intersectionlemma}, $ \langle G_{(c''B)}, G_{(cB)} \rangle = G_{(B)}$. Thus,
\[  \langle G_{(aB)}, G_{(cB)} \rangle \geq G_{(B)} .
\]
Since $ \langle G_{(aB)}, G_{(cB)} \rangle \leq  G_{(B)}$, we have $\langle G_{(aB)}, G_{(cB)} \rangle =G_{(B)}$.

\end{proof}

We now prove that if $g \in Aut(\M_S)$ fixes some non-algebraic type over $B$ setwise, then it fixes $B$ setwise. We say $P$ is a \emph{definable set over $B$} if it is a union of $G_{(B)}$-orbits and $B$ is a \emph{defining set for $P$}. Note that for $P \subseteq \M_S$ if $B_1,B_2$ are defining sets for $P$, then $\langle G_{(B_1)}, G_{(B_2)} \rangle \leq G_{\{P\}}$. So, by the previous theorem, $B_1 \cap B_2$ is a defining set for $P$ and hence it follows that $P$ has a minimal defining set.

\begin{coro}\label{fixsetwise}
Let $P \subseteq \M_S$ be an infinite definable set and $B$ is a minimal defining set for $P$. If $g\in Aut(\M_S)$ fixes $P$ setwise, then $g$ fixes $B$ setwise.
\end{coro}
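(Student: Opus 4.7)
The plan is to use the intersection property established in Theorem \ref{setequal}, together with the minimality of $B$, to force $gB$ to coincide with $B$.

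First I would verify that $gB$ is itself a defining set for $P$. The key observation is that since $B$ defines $P$ we have $G_{(B)} \leq G_{\{P\}}$; because $g \in G_{\{P\}}$ by hypothesis and $G_{\{P\}}$ is a subgroup of $Aut(\M_S)$, the conjugate $G_{(gB)} = g G_{(B)} g^{-1}$ also lies in $G_{\{P\}}$. This is exactly what it means for $gB$ to be a defining set for $P$.

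Next, I would apply the intersection property to the two defining sets $B$ and $gB$. By Theorem \ref{setequal} combined with Proposition \ref{intersectiononlyone}, one obtains $\langle G_{(B)}, G_{(gB)} \rangle = G_{(B \cap gB)}$, and since both generators on the left sit inside $G_{\{P\}}$, so does $G_{(B \cap gB)}$. Hence $B \cap gB$ is itself a defining set for $P$.

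Now the minimality of $B$ is invoked: since $B \cap gB \subseteq B$ is a defining set for $P$ and $B$ is minimal, equality must hold, giving $B \subseteq gB$. As $g$ is a bijection and $|B|$ is finite, this forces $gB = B$, i.e., $g$ fixes $B$ setwise.

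I do not expect a significant obstacle here: once the intersection property is in hand, the argument is essentially forced by the minimality hypothesis, and the corollary reads as a clean packaging of Theorem \ref{setequal}. The only subtle point worth flagging is the implicit use of the finiteness of $B$ in the final cardinality step, which is guaranteed because defining sets in this paper are finite and the intersection property ensures the existence (and in fact uniqueness) of a minimal one.
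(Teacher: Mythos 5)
Your argument is correct and is essentially the paper's own proof, just written out in more detail: the paper's version compresses the steps by invoking the remark immediately preceding the corollary (that the intersection of two defining sets is again a defining set, via Theorem \ref{setequal}), and then concludes $gB=B$ directly from minimality. Your explicit verification that $G_{(gB)}=gG_{(B)}g^{-1}\leq G_{\{P\}}$ and the final finiteness step are exactly the details the paper leaves implicit.
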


\begin{proof}
Since $g$ fixes $P$ setwise, $gB$ is also a defining set for $P$. Hence, by minimality of $B$, we have $gB=B$.
\end{proof}

Since the set of realisations of a non-algebraic type is also a definable set, we have that if $g \in Aut(\M_S)$ fixes some non-algebraic type over $B$ setwise, then it fixes $B$ setwise.

\begin{prop}\label{nofix}
Let $g \in Aut(\M_S)$. For any non-algebraic type $p$ over some finite set $B$, if $g$ fixes its set of realisations pointwise, then $g=1$.
\end{prop}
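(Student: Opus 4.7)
The plan is to argue by contradiction: assume $g\neq 1$, so there exists $d\in\M_S$ with $d':=g(d)\neq d$. The key observation is that since $g$ fixes every $a\in P$ pointwise and sends $d\mapsto d'$, the fact that $g$ is an automorphism forces $\rel(a,d)=\rel(g(a),g(d))=\rel(a,d')$ for every $a\in P$; that is, $d$ and $d'$ have identical relations to every element of $P$. It therefore suffices to produce a single $a^{*}\in P$ with $\rel(a^{*},d)\neq\rel(a^{*},d')$, which would contradict the displayed equality.

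To set up the construction, first apply Corollary~\ref{fixsetwise}: $g$ fixes setwise the minimal defining set $B_{0}$ of $P$, and since $P$ is then a single $G_{(B_{0})}$-orbit we may replace $B$ by $B_{0}$ and assume that $B$ is the minimal defining set of $P$. Fix any reference $a_{0}\in P$. To produce $a^{*}$, I use the Fra{\"\i}ss{\'e} extension property applied to a finite $\lan$-structure $F$ on vertex set $B\cup\{d,d',a^{*}\}$ whose induced substructure on $B\cup\{d,d'\}$ agrees with the one inherited from $\M_S$, whose new vertex $a^{*}$ satisfies $\rel(a^{*},b)=\rel(a_{0},b)$ for every $b\in B$ (so $tp_{F}(a^{*}/B)=p$), and with $\rel(a^{*},d)=R_{1}$, $\rel(a^{*},d')=R_{2}$, the top two priorities in $\lan'$. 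The critical triangle on $\{a^{*},d,d'\}$ then carries colours $(R_{1},R_{2},\rel(d,d'))$ and is not in $S$ by Condition~\ref{maincond}(i).

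The main obstacle will be verifying that the triangles $(a^{*},b,d)$ and $(a^{*},b,d')$ for $b\in B$ also avoid $S$, since Condition~\ref{maincond}(i) only forbids triangles with at least two colours in $\{R_{1},R_{2}\}$. My strategy is to first invoke the existence axiom of the stationary independence relation (Theorem~\ref{sir}) to obtain $a^{*}\in P$ with $a^{*}\ind_{B}\{d,d'\}$; by the definition of the prioritised semi-free amalgamation, the colours then assigned to $(a^{*},d)$ and $(a^{*},d')$ lie in $\lan'$ and automatically avoid all forbidden triangles through $B$. If these two colours already differ, we are done; otherwise, we exploit part~(ii) of Condition~\ref{maincond} together with the amalgamation flexibility built into the prioritised semi-free amalgamation to adjust $a^{*}$ so the two colours become distinct while retaining $tp(a^{*}/B)=p$ and avoiding forbidden triangles through $B$. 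This adjustment is the crux of the argument and where the hypotheses of Condition~\ref{maincond} are used in full.
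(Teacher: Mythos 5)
There is a genuine gap, and you have in fact pointed at it yourself: the ``adjustment'' in your last paragraph is the entire content of the proposition, and as described it does not go through. If you take $a^{*}$ with $a^{*}\ind_{B}dd'$, then by stationarity every such realisation of $p$ induces the \emph{same} pair of colours on $(a^{*},d)$ and $(a^{*},d')$, namely the ones dictated by the prioritised amalgamation; if these coincide, no independent realisation separates $d$ from $d'$. Trying to override one of the two colours by hand then runs into exactly the triangles you worried about: to recolour $(a^{*},d')$ by $R_{1}$ you would need $\rel(a^{*},b)\,\rel(b,d')\,R_{1}\notin S$ for all $b\in B$, which is precisely the statement that the prioritised amalgamation would already have chosen $R_{1}$; and recolouring by $R_{2}$ can create triangles such as $R_{2}R_{2}R^{1}$ or $R_{2}\hat{R}R^{1}$, which Condition~\ref{maincond}(i) does \emph{not} exclude (it only excludes triangles containing $R_{1}R_{1}$ or $R_{1}R_{2}$). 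Part~(ii) of Condition~\ref{maincond} is a statement about shrinking the base of an independence and gives you no recolouring mechanism. So for an arbitrary type $p$ and an arbitrary moved point $d$, there simply need not exist a realisation of $p$ taking different colours to $d$ and $gd$, and the one-step contradiction collapses.

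The missing idea is an intermediate reduction through the \emph{generic} $1$-type over $B$: take $c$ with $c\ind B$, so that $\rel(c,b)=R_{1}$ for every $b\in B$. The paper first shows that $g$ fixes $tp(c/B)$ pointwise: to separate $c_{1},c_{2}\in tp(c/B)$ one realises $p$ by a point $a$ with $\rel(a,c_{1})=R_{1}$ and $\rel(a,c_{2})=R_{2}$, and here every new triangle ($abc_{i}$, $ac_{1}c_{2}$) contains $R_{1}R_{1}$ or $R_{1}R_{2}$ because all edges from $c_{1},c_{2}$ to $B$ are already $R_{1}$ --- so Condition~\ref{maincond}(i) alone guarantees consistency. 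Only \emph{then} does one separate arbitrary points $x\neq y$ outside $B$, using a realisation $c'$ of $tp(c/B)$ with $\rel(x,c')=R_{1}$, $\rel(y,c')=R_{2}$ (again all new triangles contain $R_{1}R_{1}$ or $R_{1}R_{2}$), and finally handles $B$ itself via Corollary~\ref{fixsetwise} and a point $d\notin B$ with $\rel(d,b)=R_{1}$, $\rel(d,b')=R_{2}$. Your contradiction framework and your use of Corollary~\ref{fixsetwise} are fine, but without this two-stage detour the triangles through $B$ cannot be controlled, and the proof is not complete.
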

 
\begin{proof}
Without loss of generality, we may assume $B$ is a minimal defining set for $p$. Let $c \in \M_S$ be such that $c \ind B$. Then $\rel (c,b) =R_1$ for every $b \in B$.

\underline{Claim 1}: if $g$ fixes $p$ pointwise, then $g$ fixes $tp(c /B )$ pointwise.

For $c_1,c_2 \in tp(c /B )$ distinct, there exists $a$ realising $p$ such that $\rel (c_1 ,a )=R_1, \rel(c_2,a)=R_2$ since $\rel(c_1 ,b)=\rel (c_2 ,b)=R_1$ for every $b \in B $ and there does not exist forbidden triangle containing $R_1R_1$ or $R_1R_2$ as required by Condition \ref{maincond}. Since $ga=a$, $gc_1  \neq c_2$.

Since $tp(c/B)$ contains every point in $\M_S$ that is in relation $R_1$ from $B$, $g$ fixes $tp(c/B )$ setwise. Therefore, $gc_1=c_1$, i.e. $g$ fixes $tp(c /B )$ pointwise.

\underline{Claim 2}: if $g$ fixes $tp(c /B)$ pointwise, then $g=1$.

For $x,y \notin B$ distinct, there exists $c'$ realising $tp(c/B  )$ such that $\rel (x,c')=R_1, \rel(y,c')=R_2$ since $\rel(b,c')=R_1$ for every $b \in B$ and there does not exist forbidden triangle containing $R_1R_1$ or $R_1R_2$. As $gc'=c'$, $gx \neq y$. By Lemma \ref{fixsetwise}, $g$ fixes $B $ setwise, $gx \notin B $. Therefore, $gx=x$ for every $x \notin  B$.

Then for every $b,b'\in B  $ distinct, there exists $d \notin B  $ such that $\rel(d,b)=R_1, \rel (d,b')=R_2$. Hence $gb=b$ for every $b \in B$.

\begin{tikzpicture}[line cap=round,line join=round,>=triangle 45,x=1cm,y=1cm]
\clip(-4.12,-3.1) rectangle (5.12,1.94);
\draw [rotate around={90:(2.5,-0.5)},line width=1pt] (2.5,-0.5) ellipse (2.276829194552678cm and 1.0880952077678654cm);
\tikzstyle{every node}=[draw,circle,fill=white,minimum size=4pt,
                            inner sep=0pt]
\draw [line width=0.8pt] (-0.5,1.5)-- (2.5,0) ;
\draw [line width=0.8pt] (2.5,0)-- (2.5,-1);
\draw  [line width=0.8pt] (2.5,-1)-- (-0.5,-2.5);
\draw  [thick,dash pattern={on 7pt off 2pt on 1pt off 3pt}] (-2.62,-0.46)-- (2.5,0)  node[draw=none,fill=none,font=\scriptsize,midway,above] {$R_1$};
\draw  [thick,dash pattern={on 7pt off 2pt on 1pt off 3pt}] (-2.62,-0.46)-- (2.5,-1)  node[draw=none,fill=none,font=\scriptsize,midway,below] {$R_1$};
\draw  [thick,dash pattern={on 7pt off 2pt on 1pt off 3pt}] (-2.62,-0.46)-- (-0.5,1.5)  node[draw=none,fill=none,font=\scriptsize,midway,above] {$R_1$};
\draw  [thick,dash pattern={on 7pt off 2pt on 1pt off 3pt}] (-2.62,-0.46)-- (-0.5,-2.5)  node[draw=none,fill=none,font=\scriptsize,midway,below]{$R_2$};
\draw  [thick,dash pattern={on 7pt off 2pt on 1pt off 3pt}] (5,-0.5)-- (2.5,0)  node[draw=none,fill=none,font=\scriptsize,midway,above] {$R_1$};
\draw  [thick,dash pattern={on 7pt off 2pt on 1pt off 3pt}] (5,-0.5)-- (2.5,-1)  node[draw=none,fill=none,font=\scriptsize,midway,below] {$R_2$};
\draw  [line width=0.8pt] (-0.5,1.5)-- (-0.5,-2.5);
\draw  (-0.5,1.5) node (1) [draw,circle,fill=white,minimum size=4pt,
                            inner sep=0pt,label=above:$x$]{};;
\draw  (2.5,0)  node (2) [draw,circle,fill=white,minimum size=4pt,
                            inner sep=0pt,label=above:$b$]{};
\draw  (2.5,-1)  node (3) [draw,circle,fill=white,minimum size=4pt,
                            inner sep=0pt,label=below:$b' $]{};                    
\draw (-0.5,-2.5)  node (4) [draw,circle,fill=white,minimum size=4pt,
                            inner sep=0pt,label=below:$y$]{};
\draw  (-2.62,-0.46)  node (5) [draw,circle,fill=white,minimum size=4pt, inner sep=0pt,label=left:$c'$]{};
\draw (5,-0.5)  node (6) [draw,circle,fill=white,minimum size=4pt, inner sep=0pt,label=below:$d$]{};
\end{tikzpicture}

Therefore, combining the above two claims, we have the required result.

\end{proof}

We have shown that for any non-trivial $g \in Aut(\M_S)$ and any non-algebraic type $p  $ over some finite set $B$, $g$ moves some realisation $a  $ of $p  $. We can then define a new type $q  $ over $a   B$ which is defined the same over $B$ as $p  $. This is possible because of the amalgamation property. Since we can repeat this process countably many times, $g$ moves countably many realisations of $p$.

\section{Simplicity of the Automorphism Groups}

In this section, with the help of automorphisms that \emph{move almost maximally}, defined below, we apply the following theorem from \cite{tentziegler2013isometry}, which Tent and Ziegler used to study the automorphism group of the Urysohn Space. 

\begin{dfn}
Let $\M$ be a homogeneous structure with a stationary independence relation $\ind$. We say that $g \in Aut(\M)$ \emph{moves almost maximally} if every 1-type over a finite set $X$ has a realisation $a$ such that $a \ind _X ga$.
\end{dfn}

\begin{thm}\label{tzmm}
(\cite{tentziegler2013isometry}, 5.4) Suppose that $\M$ is a countable homogeneous structure with a stationary independence relation. If $g\in Aut(\M)$ moves almost maximally, then any element of $Aut(\M)$ is the product of sixteen conjugates of $g$.
\end{thm}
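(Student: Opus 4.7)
The plan is to follow the two-step strategy of Tent and Ziegler. First I would show that every $h\in Aut(\M)$ factors as $h=ab$ where $a,b\in Aut(\M)$ both move almost maximally. Second I would show that any $f\in Aut(\M)$ moving almost maximally is itself a product of eight conjugates of $g$. Multiplying the two counts yields $2\times 8=16$.

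For the first step, enumerate $\M=\{m_i:i\in\mathbb{N}\}$ and construct $a,b$ with $h=ab$ by a back-and-forth. At stage $n$ we have finite partial isomorphisms $a_n,b_n$ with $b_n\circ a_n=h|_{\mathrm{dom}(a_n)}$. To extend by a new element $m_i$, choose the image $a(m_i)$ via the Existence axiom so that it realises the required type over $\mathrm{dom}(a_n)$ and is $\ind$-independent from the current range of $b_n$ over $\mathrm{dom}(a_n)$; a symmetric step handles $b$ and enumerates the range. Taking unions gives $a,b\in Aut(\M)$ with $h=ab$. To verify that both move almost maximally: given a finite $X\subseteq\M$ and a $1$-type $p$ over $X$, trace back to a stage at which $X$ has been enumerated, and by Monotonicity and Invariance produce a realisation $x$ of $p$ with $x\ind_X a(x)$; likewise for $b$.

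For the second step, let $f$ move almost maximally and seek conjugators $k_1,\dots,k_8\in Aut(\M)$ with $f=(k_1 g k_1^{-1})\cdots(k_8 g k_8^{-1})$. The $k_i$ are built by a simultaneous back-and-forth. At each finite stage the action of $f$ is prescribed on some $X\subseteq\M$; since $g$ moves almost maximally, we use Existence to produce, for each partial $k_i$, an intermediate realisation witnessing the required conjugation, and Stationarity ensures that these partial isomorphisms are uniquely determined up to type over the current data. The Amalgamation Property and Fra\"iss\'e's extension property then yield genuine automorphisms in the limit. The constant $8$ arises from needing four conjugates to implement the ``forth'' direction of the back-and-forth and four to implement the ``back'', reflecting that each elementary adjustment must first relocate data to an independent position and then return it into place.

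The principal obstacle is step two: producing each $k_i$ individually is a direct consequence of Existence and the hypothesis that $g$ moves almost maximally, but forcing the composition to equal $f$ exactly requires a delicate synchronisation so that after $i$ conjugates have been applied, the residual discrepancy lies on data independent enough for Stationarity to apply at the next round. This independence constraint is precisely what pins down the explicit bound $8$: attempts to shorten it run out of room to apply Stationarity consistently. Throughout, the axiomatic workhorse is the pairing of Existence with Stationarity, while Symmetry and Monotonicity play only auxiliary bookkeeping roles.
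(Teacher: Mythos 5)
This statement is not proved in the paper at all: it is quoted directly from Tent and Ziegler (\cite{tentziegler2013isometry}, Theorem 5.4) and used as a black box, so there is no internal argument to compare yours against. Judged on its own terms, your proposal is an outline rather than a proof, and the gap sits exactly where you yourself place it. Your second step --- that any $f$ moving almost maximally is a product of eight conjugates of the given $g$ --- carries essentially all the content of the theorem, and you do not establish it: you describe a ``simultaneous back-and-forth'' for the conjugators $k_1,\dots,k_8$, concede that forcing the composition to equal $f$ exactly ``requires a delicate synchronisation'', and then assert that the constant $8$ falls out of that synchronisation. Nothing in the proposal specifies the chain of intermediate points $a=a_0,a_1,\dots,a_8=f(a)$, the types and independence conditions each $a_j$ must satisfy over the current finite data, or the verification (via Stationarity over the correct base sets) that each extension of each $k_i$ remains a partial isomorphism. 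The remark that attempts to use fewer conjugates ``run out of room'' argues, at best, that eight might be necessary; it does nothing towards showing that eight suffice, which is what the theorem claims.

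The first step also needs more care than you give it. To arrange $x\ind_X a(x)$ for a realisation $x$ of an arbitrary $1$-type $p$ over an arbitrary finite $X$, you choose $a(x)$ independent ``from the current range of $b_n$ over $\mathrm{dom}(a_n)$'', but the independence you ultimately need is over $X$, which is in general a proper subset of $\mathrm{dom}(a_n)$ and need not coincide with the base over which Existence hands you the new point; passing between the two bases requires explicit uses of Monotonicity and Transitivity (and a prior choice of $x$ independent from $\mathrm{dom}(a_n)\cup\mathrm{range}(a_n)$ over $X$), none of which appears. Moreover, once $a$ is chosen, $b=h a^{-1}$ is forced, so the requirements ``$a$ moves almost maximally'' and ``$b$ moves almost maximally'' must be interleaved as countably many tasks on the single map $a$; your sketch treats them as if $a$ and $b$ could be extended independently. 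Finally, the decomposition $16=2\times 8$ is itself a conjecture about how the bound arises; since neither factor is justified, the proposal as written does not prove the stated bound of sixteen. For a complete argument you should work through Section 5 of \cite{tentziegler2013isometry}, where this bookkeeping is carried out in full.
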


In this section, we prove the following theorem for $\M_S$ where $S$ satisfies Condition \ref{maincond}. This then implies the simplicity of $Aut(\M_S)$ by Theorem \ref{tzmm}. We let $[h,g]$ denote the \emph{commutator} $h^{-1} g^{-1}hg$.

\begin{thm}\label{main}
Let $S$ be a set of forbidden triangles satisfying Condition \ref{maincond} such that $Forb_c(S)$ forms a prioritised semi-free amalgamation class. Let $\M_S$ be the Fra{\"\i}ss{\'e} limit of $Forb_c(S)$. Then for any non-trivial automorphism $g\in Aut(\M_S)$, there exist $k,h \in Aut(\M_S)$ such that $[k,[h,g]]$ moves almost maximally.
\end{thm}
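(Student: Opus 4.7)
The plan is a two-stage argument. First fix an $h \in \mathrm{Aut}(\M_S)$ making $\phi := [h,g]$ non-trivial: since $g \neq 1$, pick $x \in \M_S$ with $gx \neq x$ and, by homogeneity of $\M_S$, choose $h$ with $h(gx) = gx$ and $hx \neq x$ (such $h$ exists because $tp(x/gx)$ is non-algebraic, so infinitely many elements realise it); then $\phi(x) = h^{-1}g^{-1}h(gx) = h^{-1}(x) \neq x$. Proposition~\ref{nofix} applied to $\phi$ now gives that $\phi$ moves countably many realisations of every non-algebraic $1$-type over every finite parameter set.

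Build $k = \bigcup_n k_n$ as an increasing chain of finite partial automorphisms of $\M_S$ via back-and-forth, aiming to make $\psi := [k, \phi] = k^{-1}\phi^{-1}k\phi$ move almost maximally. Enumerate the pairs $(X_n, p_n)$ of finite sets and non-algebraic $1$-types over them, each recurring infinitely often, and enumerate $\M_S = \{m_0, m_1, \ldots\}$. At stage $n$, let $Y_n$ be the union of $X_n$, $\mathrm{dom}(k_n) \cup \mathrm{ran}(k_n)$, and their images under $\phi$ and $\phi^{-1}$. By the existence axiom of $\ind$ (Theorem~\ref{sir}) combined with the remarks following Proposition~\ref{nofix}, choose a realisation $a_n$ of $p_n$ with $a_n \ind_{X_n} Y_n$ and $\phi(a_n) \neq a_n$, and set $b_n := \phi(a_n)$. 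Extend $k_n$ by $k(b_n) := c_n$ for a fresh $c_n$ chosen generic over the current data, set $d_n := \phi^{-1}(c_n)$, and extend $k_n$ once more by $k(e_n) := d_n$, where $e_n$ is selected so that (a) $tp(e_n/\mathrm{dom}(k_n))$ is the transport by $k$ of $tp(d_n/\mathrm{ran}(k_n))$, required for $k_n$ to remain a partial automorphism, and (b) $a_n \ind_{X_n} e_n$. By construction $\psi(a_n) = k^{-1}\phi^{-1}k\phi(a_n) = e_n$, witnessing moving-almost-maximally for $(X_n, p_n)$. Surjectivity of $k$ is handled by the standard back-and-forth addition of $m_n$ at each stage.

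The main obstacle is to show that such an $e_n$ can always be found. Since $a_n \ind_{X_n} Y_n$ and $\mathrm{dom}(k_n) \subseteq Y_n$, the prioritised amalgamation defining $\ind$ (Theorem~\ref{sir}) forces $\rel(a_n, b) \in \lan'$ for every $b \in \mathrm{dom}(k_n) \setminus X_n$. We first extend the forced type of $e_n$ over $\mathrm{dom}(k_n)$ to a type over $\mathrm{dom}(k_n) \cup X_n$ using the amalgamation property of $Forb_c(S)$, realise it in $\M_S$ by the extension property of the Fraïssé limit, and then use existence to pass to an $e_n$ with $e_n \ind_{X_n \cup \mathrm{dom}(k_n)} a_n$. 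By symmetry this gives $a_n \ind_{X_n \cup \mathrm{dom}(k_n)} e_n$, and iterated application of clause~(ii) of Condition~\ref{maincond} strips the parameters in $\mathrm{dom}(k_n) \setminus X_n$ off the base one at a time (applicable precisely because $\rel(a_n, b) \in \lan'$ for each such $b$), yielding $a_n \ind_{X_n} e_n$. Taking limits, $k \in \mathrm{Aut}(\M_S)$; since each $(X_n, p_n)$ is processed infinitely often, $\psi = [k, [h, g]]$ moves almost maximally, as required.
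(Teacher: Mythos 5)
Your second stage reproduces, in different bookkeeping, the back-and-forth of the paper's Theorem \ref{movealmost}, but your first stage is too weak, and this is a genuine gap rather than a cosmetic one. You only arrange that $\phi=[h,g]$ is non-trivial, whereas the stage-two construction needs, for every non-algebraic $1$-type $p_n$ over $X_n$, a realisation $a_n$ with $\rel(a_n,\phi(a_n))\in\lan'$. The problem is that $b_n=\phi(a_n)$ unavoidably enters the domain of the partial map before $e_n$ is chosen: you set $k(b_n)=c_n$, and the requirement that $k(e_n)=d_n$ be compatible with this forces $\rel(e_n,b_n)=\rel(d_n,c_n)$, so $b_n$ sits among the parameters over which the type of $e_n$ is prescribed and must be stripped off the base to reach $a_n\ind_{X_n}e_n$. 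Condition \ref{maincond}(ii) (even after using symmetry) removes a base point $b$ only when $\rel(a_n,b)\in\lan'$ or $\rel(e_n,b)\in\lan'$. Your genericity requirement $a_n\ind_{X_n}Y_n$ guarantees this for $b\in\mathrm{dom}(k_n)\setminus X_n$, but it says nothing about $b_n=\phi(a_n)$, since $\phi(a_n)$ moves with $a_n$ and cannot be placed into $Y_n$ in advance; nor is $\rel(e_n,b_n)=\rel(\phi^{-1}(c_n),c_n)$ controlled by taking $c_n$ ``fresh''. The failure is concrete: in case $\#11$ with $\lan'=\{R,G\}$ and $R>G$, if $\rel(a_n,b_n)=\rel(b_n,e_n)=X$ then the forbidden triangles $RXX$, $YXX$, $XXX$ force $\rel(a_n,e_n)=G$, while (say for $X_n=\emptyset$) the prioritised amalgamation underlying $a_n\ind_{X_n}e_n$ demands $\rel(a_n,e_n)=R$; no admissible $e_n$ exists.

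This is exactly why the paper interposes Lemma \ref{colourrange}: $h$ is built by a separate back-and-forth (using Proposition \ref{nofix} together with a semi-free amalgamation of the prospective preimage of $gb$ with $ga$ over $aA$) so that for every non-algebraic $1$-type there are infinitely many realisations $a$ with $\rel(a,[h,g]a)\in\lan'$. With that strengthened property of $\phi=[h,g]$, one may additionally require $\rel(a_n,\phi(a_n))\in\lan'$ when choosing $a_n$, and the offending base point can then be removed by Condition \ref{maincond}(ii); this is precisely how Theorem \ref{movealmost} proceeds. Your argument is repaired by replacing your choice of $h$ with the one from Lemma \ref{colourrange} and adding this colour requirement to the choice of $a_n$; as written, it does not go through.
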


This immediately implies the following:
\begin{coro}\label{maincoro}
Under the same assumptions as in the previous theorem, $Aut(\M_S)$ is simple. In particular, if $1 \neq g \in Aut(\M_S)$, then every element of $Aut(\M_S)$ can be written as a product of 64 conjugates of $g^{\pm 1}$. 
\end{coro}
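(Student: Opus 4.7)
The plan is to combine Theorem \ref{main} with Theorem \ref{tzmm} and finish by a short bookkeeping of conjugates, using the stationary independence relation on $\M_S$ provided by Theorem \ref{sir}. Given any non-trivial $g \in Aut(\M_S)$, Theorem \ref{main} supplies $h,k \in Aut(\M_S)$ for which the iterated commutator $t := [k,[h,g]]$ moves almost maximally. Since $\M_S$ is a countable homogeneous structure carrying a stationary independence relation, Theorem \ref{tzmm} then gives that every element of $Aut(\M_S)$ is a product of sixteen conjugates of $t$.

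To derive the explicit bound of $64$, I will expand the two commutators. Writing
\[
[h,g] \;=\; h^{-1}g^{-1}h g \;=\; (h^{-1}g^{-1}h)\cdot g
\]
exhibits $[h,g]$ as a product of two conjugates of $g^{\pm 1}$, and taking inverses shows the same for $[h,g]^{-1}$. Then
\[
t \;=\; k^{-1}[h,g]^{-1}k \cdot [h,g]
\]
is a product of four conjugates of $g^{\pm 1}$, because conjugation by $k$ sends the set of conjugates of $g^{\pm 1}$ to itself. The same observation applied to any conjugate $x^{-1}tx$ shows that a conjugate of $t$ is still a product of four conjugates of $g^{\pm 1}$, so a product of sixteen conjugates of $t$ is a product of $16\cdot 4 = 64$ conjugates of $g^{\pm 1}$, proving the ``in particular'' clause.

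Simplicity is then immediate: if $N \trianglelefteq Aut(\M_S)$ is any non-trivial normal subgroup, pick $1 \neq g \in N$; all conjugates of $g^{\pm 1}$ lie in $N$, and hence so does every element of $Aut(\M_S)$ by the bound above, forcing $N = Aut(\M_S)$. There is no genuine obstacle at this stage, since the substantive work is concentrated in Theorems \ref{sir}, \ref{main}, and \ref{tzmm}; the only step requiring any care is the arithmetic of conjugates in $[k,[h,g]]$, which yields the factor of four that combines with Tent--Ziegler's sixteen to give $64$.
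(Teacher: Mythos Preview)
Your proof is correct and is precisely the intended expansion of what the paper calls an immediate consequence: combine Theorem~\ref{main} with Theorem~\ref{tzmm}, then unwind the two commutators to see that $[k,[h,g]]$ is a product of four conjugates of $g^{\pm 1}$, giving $16\cdot 4=64$. The paper itself does not spell out the arithmetic, so your write-up matches (and slightly elaborates on) the paper's approach.
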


In order to prove Theorem \ref{main}, we show the following results.

\begin{lem}\label{colourrange}
Let $S$ be a set of forbidden triangles such that $Forb_c(S)$ is a semi-free amalgamation class. Let $\M_S$ be its Fra{\"\i}ss{\'e} limit. Then given any non-trivial $g \in Aut(\M_S)$, we can construct $h\in Aut(\M_S)$ such that for any non-algebraic 1-type $p$ over some finite set $X$, there exist infinitely many realisations $a$ of $p$ such that $\rel (a, [h,g]a) \in \lan '$.
\end{lem}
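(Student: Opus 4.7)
The plan is to construct $h$ via a back-and-forth argument, building a chain $h_0 \subseteq h_1 \subseteq \cdots$ of finite partial automorphisms of $\M_S$ whose union $h = \bigcup_s h_s$ will be the desired automorphism. We enumerate countably many ``tasks'': triples $(X_j, p_j, n_j)$ with $X_j \subseteq \M_S$ finite, $p_j$ a non-algebraic $1$-type over $X_j$, and $n_j \in \mathbb{N}$; handling a $(X,p,n)$-task produces an $n$-th realisation $a$ of $p$ with $\rel(a, [h,g]a) \in \lan'$. We interleave these with an enumeration of $\M_S$ to guarantee $h$ is total and surjective.

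At a stage addressing a task $(X, p, n)$, let $D := \mathrm{dom}(h_s)$ and $E := \mathrm{im}(h_s)$. By Proposition \ref{nofix} applied to $g \neq 1$, the set of realisations of $p$ moved by $g$ is infinite, so I pick such an $a$ fresh with respect to $D \cup E \cup X$ and to previously handled realisations. Using the existence axiom for $\ind$, I arrange $a \ind_X (D \cup g^{-1}D)$, so that both $a$ and $ga$ have generic (i.e.\ $R_1$-everywhere) $1$-type over $D$. Then I pick $a' \in \M_S$ fresh, with $\rel(a, a') \in \lan'$, independent of $\{ga\} \cup D$ over $\{a\}$. The commutator identity gives $[h,g]a = h^{-1}g^{-1}h(ga)$; if I commit at this stage to $h(a') = \gamma$ and $h(ga) = g\gamma$ for some $\gamma \in \M_S$, then $[h,g]a = h^{-1}(\gamma) = a'$, so automatically $\rel(a, [h,g]a) = \rel(a, a') \in \lan'$.

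The remaining step is to find $\gamma \in \M_S$ making $h_{s+1} := h_s \cup \{(a', \gamma), (ga, g\gamma)\}$ a valid partial isomorphism; this means matching the $2$-type of $(a', ga)$ over $D$ (via $h_s$) with that of $(\gamma, g\gamma)$ over $E$. By the generic choices of $a, a', ga$ above, the $1$-type constraints are automatic once $\gamma$ is chosen generic over $E \cup g^{-1}E$ (possible by the existence axiom combined with Proposition \ref{nofix} to ensure $g\gamma \neq \gamma$), reducing the problem to matching the single edge relation $\rel(a', ga) = \rel(\gamma, g\gamma)$.

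\textbf{Main obstacle.} This edge-matching is the crux of the argument. The strategy is to reverse the order of choices: first pick a generic $\gamma$ with $g\gamma \neq \gamma$ and set $R := \rel(\gamma, g\gamma)$; then re-select $a'$ to satisfy $\rel(a, a') \in \lan'$ and $\rel(a', ga) = R$, which exists in $\M_S$ by the extension property provided the corresponding triangle on $\{a, a', ga\}$ is not in $S$. Verifying this compatibility is where the semi-free amalgamation property of $Forb_c(S)$ enters: the solution-set $\lan'$ controls the edges between new vertices on opposite sides of any amalgamation, which ensures that some admissible $\lan'$-value for $\rel(a, a')$ is compatible with the desired $R$. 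Since $\rel(\gamma, g\gamma)$ takes only finitely many values while generic $\gamma$ are infinite in number, if the first attempt is incompatible we may re-select $\gamma$ from one of the infinitely many generic choices with $g\gamma \neq \gamma$. Iterating this construction across all enumerated tasks yields $h$ with the required property: each non-algebraic $1$-type $p$ over any finite $X$ acquires, one per index $n$, infinitely many realisations $a$ with $\rel(a, [h,g]a) \in \lan'$.
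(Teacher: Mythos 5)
Your overall back-and-forth framework and the use of Proposition \ref{nofix} to supply fresh realisations moved by $g$ are fine, but the step you yourself flag as the ``main obstacle'' is a genuine gap, and the way you propose to close it does not work. By committing $h(ga)=g\gamma$ and $h(a')=\gamma$ you place \emph{both} $ga$ and $a'=[h,g]a$ in the domain of the partial isomorphism, which forces two matching conditions: $\rel(a',ga)=\rel(\gamma,g\gamma)$ and $tp(ga/D)\leftrightarrow tp(g\gamma/E)$. Neither is resolved. For the first, semi-freeness only guarantees that cross-edges of an amalgamation \emph{can} be completed inside $\lan'$; it does not guarantee that a prescribed value $R=\rel(\gamma,g\gamma)$ --- which may well lie outside $\lan'$ --- is compatible with an $\lan'$-edge $\rel(a,a')$ and the already-fixed edge $\rel(a,ga)$ in the triangle $\{a,a',ga\}$, nor with the triangles $a'\,ga\,d$ for $d\in D$. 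Re-selecting $\gamma$ does not help: all generic $\gamma$ with $g\gamma\neq\gamma$ could yield the same problematic value of $\rel(\gamma,g\gamma)$, and you give no argument that a compatible value is ever attained. For the second condition, $tp(g\gamma/E)$ is the generic ($R_1$-everywhere) type by your choice of $\gamma$, but $tp(ga/D)$ is generally \emph{not} generic (it is governed by $ga\ind_{gX}D$, and $a$ must realise $p$ over $X\subseteq D$), so the required type correspondence fails outright. A secondary point: your appeal to the existence axiom of $\ind$ presupposes a stationary independence relation, which under the lemma's hypothesis (semi-free, not prioritised semi-free) is not available; the paper's proof of this lemma deliberately uses only the extension property and semi-freeness.

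The paper sidesteps all of this with a different bookkeeping. It extends $\tilde h$ by $a\mapsto b$ (with $b$ realising $\tilde h\cdot tp(a/A)$ and moved by $g$) and then by $c\mapsto gb$, where $c$ realises the pulled-back type $\tilde h^{-1}\cdot tp(gb/bB)$ over $aA$ \emph{and} is chosen, via the extension property, so that $c$ and $ga$ are semi-freely amalgamated over $aA$. Since $ga$ is never put into the domain, the only constraint on $c$ is its type over $aA$, and the edge $\rel(c,ga)$ is genuinely free to be completed inside $\lan'$ --- this is a correct use of semi-freeness. The identity $\rel(a,[h,g]a)=\rel(h^{-1}gha,\,ga)=\rel(c,ga)$ then gives the conclusion without ever prescribing a non-$\lan'$ edge. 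If you want to salvage your variant, you would have to either prove the compatibility claims above or switch to the paper's choice of which points enter the domain.
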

\begin{proof}
List all 1-types over a finite set as $p_1,p_2,...$. We start with the empty map and use a back-and-forth construction to build $h$. Suppose at the some stage, we have a partial isomorphism $\tilde{h}:A \rightarrow B$ such that for any $p_j \in \{p_1,...,p_{i-1}\}$, there exists a realisation $a$ of $p_j$ such that $\rel (a, [\tilde{h},g]a) \in \lan '$.

Let $p:=p_i$ be a 1-type over $X$. We want to extend $\tilde{h}$ such that $p$ has a realisation $a$ such that $\rel (a, [\tilde{h},g]a)  \in \lan '$. We may assume $X \subseteq A$ by extending $\tilde{h}$.

Since $p$ is non-algebraic, by Proposition \ref{nofix}, $p$ has a realisation $a$ such that $a \notin A \cup g^{-1} (aA)$ and $\tilde{h} \cdot tp(a/A)$ has a realisation $b$ such that $b\notin B \cup g^{-1} (bB)$. Extend $\tilde{h}$ by sending $a$ to $b$. It is well-defined since $\tilde{h}\cdot tp(a/A)=tp(b/B)$.

By the extension property, there exists a realisation $c$ of $\tilde{h}^{-1} \cdot tp(gb/bB)$ such that $c$ and $ga$ are semi-freely amalgamated over $aA$. Since $b \notin g^{-1}(bB)$ and $\tilde{h} \cdot tp(c/aA)=tp(gb/bB)$, we have $c\notin aA$. We also have $ga \notin aA$, hence $(c,ga)$ is coloured using relations from $\lan'$, i.e. $\rel (c,ga) \in \lan'$. 

Extend $\tilde{h}$ by sending $c$ to $gb$. Since $\tilde{h} \cdot tp(c/aA)=tp(gb/bB)$, $\tilde{h}$ is a well-defined partal isomorphism. Then $c=\tilde{h}^{-1} g \tilde{h} a$ and we have  
\[ \rel (a,[\tilde{h},g]a) = \rel ( a, \tilde{h}^{-1} g ^{-1} \tilde{h}g a )= \rel (\tilde{h}^{-1} g \tilde{h} a, g a )= \rel (c,ga) \in \lan'. \] 

At every other step, we can make sure $X \subset B$ by extending $\tilde{h}$. Let $h$ be the union of all $\tilde{h}$ over each step, it is an automorphism since it is well defined and bijective as we made sure every finite subset of $\M$ is contained in both domain and image. 
\end{proof}

\begin{thm}\label{movealmost}
Let $S$ be a set of forbidden triangles satisfying Condition \ref{maincond} and $Forb_c(S)$ be a prioritised semi-free amalgamation class as defined in Definition \ref{priority}. Let $\M_S$ be its Fra{\"\i}ss{\'e} limit and $g \in Aut(\M_S)$ be an automorphism of $\M_S$ such that for any non-algebraic 1-type $p$ over some finite set, there exist infinitely many realisations $a$ of $p$ with $\rel (a, g a) \in \lan '$. Then there exists $k \in Aut(\M_S)$ such that $[k,g]$ moves almost maximally. 
\end{thm}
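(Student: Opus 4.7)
The approach is to construct $k \in \mathrm{Aut}(\M_S)$ by a back-and-forth, ensuring at each stage that one more non-algebraic 1-type $p$ over a finite $X$ is handled, in the sense that some realisation $a$ of $p$ satisfies $a \ind_X [k,g]a$. Enumerate the 1-types (with cofinal repetition) and the elements of $\M_S$ so that the limit map is an automorphism, and grow a chain of partial isomorphisms $\tilde{k}_0 \subsetneq \tilde{k}_1 \subsetneq \cdots$ accordingly. At the stage handling $p_i$ over $X \subseteq A$ (current domain, with image $B := \tilde{k}(A)$), I would use the hypothesis on $g$ applied to the generic extension of $p_i$ to a 1-type over $A$ to pick a realisation $a$ with $\rel(a,ga) \in \lan'$, $a, ga \notin A$, and $a \ind_X A$; then extend $\tilde{k}$ by $a \mapsto b$ for a generic $b \in \M_S$ realising $\tilde{k} \cdot \mathrm{tp}(a/A)$, obtaining $\tilde{k}_1$.

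The core step is to find $y \in \M_S$ and extend $\tilde{k}_1$ by both $ga \mapsto gy$ and $c \mapsto y$ for some $c \in \M_S$; this automatically forces $[\tilde{k}',g]a = \tilde{k}'^{-1}g^{-1}\tilde{k}'(ga) = \tilde{k}'^{-1}(y) = c$. I prescribe the type of $y$ on the finite set $g^{-1}(bB) \cup bB$ by two ingredients: on $g^{-1}(bB)$, by the $g$-pullback of $\tilde{k}_1 \cdot \mathrm{tp}(ga/aA)$, so that $gy$ realises $\tilde{k}_1 \cdot \mathrm{tp}(ga/aA)$ over $bB$; on $bB$, by $\rel(y,z) = R_1$ for every $z \in bB$. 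For generically chosen $a, b$ these two specifications can be arranged to agree on the overlap $bB \cap g^{-1}(bB)$, and Condition \ref{maincond}(i) (which forbids no triangle containing $R_1R_1$ or $R_1R_2$) rules out forbidden triangles in the prescribed structure. The resulting 1-type is non-algebraic, so the hypothesis on $g$ furnishes a realisation $y$ with $\rel(y,gy) \in \lan'$.

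Extending $\tilde{k}_1$ by $ga \mapsto gy$ and then by $c \mapsto y$ for $c \in \M_S$ realising the $\tilde{k}_2^{-1}$-pullback of $\mathrm{tp}(y / bB \cup \{gy\})$, one obtains $[\tilde{k}_3,g]a = c$ together with $\rel(c,z) = R_1$ for every $z \in A$. By Condition \ref{maincond}(i), the prioritised amalgamation of $aX$ with $Xc$ over $X$ then assigns $\rel(a,c) = R_1$, which matches the value already forced by the construction, so $a \ind_X c$ as required. Standard back-and-forth bookkeeping completes $k \in \mathrm{Aut}(\M_S)$, and $[k,g]$ moves almost maximally. The main obstacle is the consistency of the two type-specifications of $y$ on the overlap $bB \cap g^{-1}(bB)$: strict $R_1$-agreement there may be unattainable if $g$ forces nontrivial relations between $b, B$ and their $g$-images, in which case I would weaken the target to $a \ind_{X \cup \{ga\}} c$ (where $c$ need only have controlled relations with $X$, not with all of $A$) and appeal to Condition \ref{maincond}(ii) together with $\rel(a,ga) \in \lan'$ to peel off $ga$ from the independence base and recover $a \ind_X c$.
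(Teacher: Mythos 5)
Your overall architecture (a back-and-forth construction, extending $\tilde{k}$ at $a$ and at a preimage of $g\tilde{k}(ga)$ so that $[\tilde{k},g]a$ becomes a concrete point $c$) matches the paper's proof, but the core step --- establishing $a \ind_X [\tilde{k},g]a$ --- does not go through as written. Your primary mechanism prescribes two things about $y$: that $gy$ realise $\tilde{k}_1\cdot tp(ga/aA)$ over $bB$ (equivalently, a specific type for $y$ over $g^{-1}(bB)$), and that $\rel(y,z)=R_1$ for all $z\in bB$. On the overlap $bB\cap g^{-1}(bB)$ these genuinely conflict, and the conflict is not removable by choosing $a,b$ generically: if, say, $g$ fixes some $z_0\in B$, then the first specification forces $\rel(y,z_0)$ to equal $\rel(ga,w)$ for the point $w\in aA$ corresponding to $z_0$ under $\tilde{k}$ and $g$, and this value is dictated by the fixed automorphism $g$, not by any choice available to you; there is no reason for it to be $R_1$. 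So the claim that the two specifications ``can be arranged to agree'' is unsubstantiated and false in general --- your own flagging of this as ``the main obstacle'' is accurate, and it is fatal to the primary route.

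Your fallback (weaken the target to $a\ind_{X\cup\{ga\}}c$ and peel off $ga$ using part (ii) of Condition \ref{maincond} together with $\rel(a,ga)\in\lan'$) points in the right direction --- this is exactly the role that condition plays in the paper --- but it is only a sentence, and the substantive content of the proof is precisely what it omits. The paper never tries to control $tp(c/A)$ explicitly. Instead it: (1) chooses $b$ realising $\tilde{k}\cdot tp(a/A)$ with $b\ind_B g^{-1}B$; (2) transports $a\ind_X A$ to $b\ind_{\tilde{k}X}B$ and combines by transitivity to get $b\ind_{\tilde{k}X}g^{-1}B$; (3) chooses $c$ realising $\tilde{k}^{-1}\cdot tp(gb/bB)$ with $c\ind_{aA}ga$ by the existence axiom and reduces this to $c\ind_A ga$ by part (ii) of Condition \ref{maincond} (this is where $\rel(a,ga)\in\lan'$ is actually used); (4) pushes (2) forward by $\tilde{k}^{-1}g$ to obtain $c\ind_{\tilde{k}^{-1}g\tilde{k}X}A$; and (5) applies transitivity to (3) and (4) and acts by $\tilde{k}^{-1}g^{-1}\tilde{k}$. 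None of this chain of independences appears in your proposal, and without it you have not established $a\ind_{X\cup\{ga\}}c$ either, so the fallback does not close the gap. As it stands the argument is incomplete.
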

\begin{proof}
We again use a back-and-forth construction as in the previous proof. Suppose at some stage, we have a partial isomorphism $\tilde{k}:A \rightarrow B$. It suffices to show that we can extend $\tilde{k}$ so that $[\tilde{k},g]$ moves some given 1-type $p$ almost maximally. We may assume $p$ is non-algebraic.

We may also assume $X \subseteq A$ and $\tilde{k} X \subseteq g^{-1}B$ by extending $\tilde{k}$, then we have $\tilde{k}^{-1}g\tilde{k} X \subseteq A$.

By the existence axiom of $\ind$ and the assumption about $g$, there exists a realisation $a$ of $p$ such that $\rel (a, ga)   \in \lan '$ and 
\begin{align}\label{aindA}
a \ind _X A.
\end{align}

By existence and Lemma \ref{nofix}, there exists a realisation $b$ of $\tilde{k} \cdot tp(a/A)$ such that $g b \neq b$ and 
\begin{align}\label{bindinv}
b \ind _B g^{-1}B.
\end{align}
Extend $\tilde{k}$ by sending $a$ to $b$. It is a well-defined partial isomorphism since $\tilde{k}\cdot tp(a/A)=tp(b/B)$. From (\ref{aindA}), by invariance, we get 
\begin{align}\label{bindB}
b \ind _{\tilde{k} X} B.
\end{align}

Applying transitivity on (\ref{bindinv}) and (\ref{bindB}), we have 
\begin{align}\label{bindginvb}
b \ind _{\tilde{k} X}g^{-1} B.
\end{align}

Again by the existence axiom, there exists a realisation $c$ of $ \tilde{k}^{-1} \cdot  tp(gb/bB)$ such that 
\begin{align*}
c \ind _{aA} ga.
\end{align*}
Since $\rel (a,ga) \in \lan'$, by part (ii) of Condition \ref{maincond}, we have 
\begin{align}\label{cindga}
c \ind _A ga.
\end{align}

Extend $\tilde{k}$ by sending $c$ to $gb$. We have $tp (gb/bB)=\tilde{k} \cdot tp(c/aA)$ and $gb \neq b$, so $\tilde{k}$ is a well-defined partial isomorphism.

Acting by $\tilde{k}^{-1}g$ on (\ref{bindginvb}), we then get
\begin{align*}
\tilde{k}^{-1}gb \ind _{\tilde{k}^{-1}g\tilde{k} X} \tilde{k}^{-1}B,
\end{align*}
which can be simplified to
\begin{align}\label{cindA}
 c \ind _{\tilde{k}^{-1}g \tilde{k} X} A.
\end{align}

Since $ \tilde{k}^{-1}g \tilde{k} X \subseteq A $, we can apply transitivity on (\ref{cindga}) and (\ref{cindA}) to obtain $ c \ind _{\tilde{k}^{-1}g\tilde{k} X} ga$. Acting by $\tilde{k}^{-1}g^{-1}  \tilde{k}$ on it, we have the required result, i.e.
\begin{align*}
a \ind _X [\tilde{k},g] a.
\end{align*} 

\end{proof}

\begin{proof}[Proof of Theorem \ref{main}]
Given any automorphism $g \in Aut(\M_S)$, we can first find $h \in Aut(\M_S)$ such that for any non-algebraic 1-type $p$ over some finite set, there exist infinitely many realisations $a$ of $p$ such that $\rel (a, [h,g] a) \in \lan '$. Then by the previous theorem, there exists $k \in Aut(\M_S)$ such that $[k,[h,g]]$ moves almost maximally.
\end{proof}

\section{Examples}

\subsection{General Cases}

With notation and assumption as stated in Definition \ref{priority}, we fix $\lan'$ and an ordering on $\lan'$. Let $S$ be a set of forbidden triangles. To make sure $A \otimes _B C \in Forb_c(S)$, we can impose one of the following two conditions on $S$:

\begin{con}\label{condition1}
Assume $S$  does not contain any triangle of the form $R_i R_j R'$ where $R_i, R_j \in \lan '$ and $R' \in \lan$.
\end{con}

\begin{con}\label{condition2}
Assume $\lan'=\{R_1,R_2\}$ with order $R_1 >R_2$ and for some subset $\lan ^{\star} \subseteq \lan \backslash \lan '$ and $\hat{\lan}  \subseteq \lan \backslash (\lan ' \cup \lan^{\star})$, $S$ contains all triangles of the form $R'R^1R^2, R_2R_2R^1, R_2 \hat{R} R^1$, where $R' \in \lan \setminus (\hat{\lan} \cup \{R_2\}),  R^1,R^2 \in \lan ^{\star}, \hat{R} \in \hat{\lan}$ and $S$ contains no other triangle involving $R_1$ or $R_2 R_2$.

\end{con}

In this section, we will show that if $S$ satisfies either one of the above two conditions, then $Forb_c(S)$ forms a prioritised semi-free amalgamation class and satisfies Condition \ref{maincond}. Therefore, by Corollary \ref{maincoro}, the automorphism group of its Fra{\"\i}ss{\'e} limit $\M_S$ is simple. The following lemma states that 27 of 28 Cherlin's examples satsify either one of the conditions can be checked by quick computation. Hence, the automorphism groups of their Fra{\"\i}ss{\'e} limits are also simple.

\begin{lem}
In Cherlin's list, the only case where $\lan$ consists of three relations satisfies Condition \ref{condition2} with $\lan'=\{ R,G\}$, $\lan ^{\star} =\{X\}$ and order $R>G$. For $\lan$ consisting of four relations, $\lan'=\{ R,G\}$ works for all 27 cases execpt case \# 26. Cases \# 1 to \# 10 and cases \#21 to \# 25 satisfy Condition \ref{condition1}. Cases \#11 to \# 18 satisfy Condition \ref{condition2} with $\lan ^{\star} =\{X\}, \hat{\lan}=\emptyset$ and the order $R>G$. Case \# 19 and case \# 20 satisfy Condition \ref{condition2} with $\lan ^{\star} =\{X\}, \hat{\lan}=\{Y\}$ and the order $R>G$. Case \#27 satisfies Condition \ref{condition2} with $\lan ^{\star} =\{X,Y\}, \hat{\lan}=\emptyset$ and the order $G>R$.
\end{lem}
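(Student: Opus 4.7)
The plan is a straightforward case-by-case verification against Table 1; there is no real obstacle here, only bookkeeping. In each entry I would fix the ordered $\lan'$ (and, in the Condition \ref{condition2} cases, also $\lan^\star$ and $\hat{\lan}$) as prescribed in the statement, and then inspect the syntactic shape of every triangle appearing in $S$.

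First I would dispose of the cases for which Condition \ref{condition1} is asserted, namely \# 1--\# 10 and \# 21--\# 25 of the four-relation list, all with $\lan' = \{R,G\}$. Here the hypothesis reduces to observing that no forbidden triangle in $S$ contains two relations drawn from $\{R,G\}$. This is immediate from the table: each listed triangle has at most one of $R$ or $G$, with its remaining entries lying in $\{X,Y\}$.

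For the Condition \ref{condition2} cases I would, for each entry, enumerate as multisets the triangles forced by the condition, namely those of shapes $R'R^1R^2$, $R_2R_2R^1$, and $R_2\hat{R}R^1$; check that every such triangle appears in $S$; and check that no further triangle of $S$ involves $R_1$ or two copies of $R_2$. For the single three-relation case and for \# 11--\# 18, with $\lan^\star=\{X\}$, $\hat{\lan}=\emptyset$ and $R>G$, the required list reduces to $\{R'XX : R' \in \lan \setminus \{G\}\} \cup \{GGX\}$, which is readable off each row. Cases \# 19 and \# 20 add the single required triangle $GYX$ coming from $R_2\hat{R}R^1$ with $\hat{R}=Y$. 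For \# 27, with $\lan^\star=\{X,Y\}$, $\hat{\lan}=\emptyset$ and $G>R$ (so $R_1=G$, $R_2=R$), the required triangles are all three-element multisets $\{R',\ast,\ast\}$ with $R' \in \{G,X,Y\}$ and $\ast \in \{X,Y\}$, together with $RRX$ and $RRY$; after collapsing multisets this produces exactly the nine triangles listed under \# 27.

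The only subtlety to watch is treating triangles as multisets when enumerating the $R'R^1R^2$ family, so as not to overcount duplicates such as $YXY$ and $YYX$; otherwise the argument is a purely mechanical comparison of the required lists against Table 1.
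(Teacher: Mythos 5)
Your proposal is correct and matches the paper's approach: the paper offers no written proof of this lemma, leaving it as a ``quick computation'', and your case-by-case comparison of the required triangle multisets against Table~1 is exactly that computation carried out explicitly (including the correct nine-triangle enumeration for \#27 and the multiset caveat). Nothing is missing.
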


In the next two lemmas, we show that for a set of forbidden triangles $S$ satisfying either Condition \ref{condition1} or Condition \ref{condition2}, the corresponding $Forb_c(S)$ forms a prioritised semi-free amalgamation class as defined in Definition \ref{priority}. Hence, by Theorem \ref{sir}, there is a stationary independence relation on the corresponding $\M_S$.

\begin{lem}\label{inforb}
Let $S$ be a set of forbidden triangles satisfying Condition \ref{condition1}. For any $A,B,C \in Forb_c(S)$ such that $B \subseteq A,C$, let $A \otimes _B C$ be the amalgamation defined in Definition \ref{priority}. Then $A \otimes _B  C \in Forb_c(S)$.
\end{lem}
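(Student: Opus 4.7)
The plan is to verify that no forbidden triangle appears in $A \otimes_B C$ by a case analysis on how the three vertices of a potential forbidden triangle distribute among the three pieces $B$, $A \setminus B$, and $C \setminus B$. The key observation is that the only newly coloured edges are those between $A \setminus B$ and $C \setminus B$, and by construction every such edge carries a relation from $\lan'$.

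Before the case analysis, I would first verify that the prioritised amalgamation is actually well-defined, that is, for every pair $(a,c) \in (A \setminus B) \times (C \setminus B)$ at least one $R_i \in \lan'$ produces no forbidden triangle at $\{a,b,c\}$ for any $b \in B$. Since $Forb_c(S)$ is a semi-free amalgamation class, there exists a semi-free completion $D \in Forb_c(S)$ of $A$ and $C$ over $B$; the colour $\rel^D(a,c) \in \lan'$ then witnesses that some $R_i \in \lan'$ works at $(a,c)$, so the smallest such $R_i$ is picked up by the prioritised rule.

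For the case analysis, let $T$ be an arbitrary triangle in $A \otimes_B C$. If all three vertices of $T$ lie in $A$, then $T$ is a triangle of $A \in Forb_c(S)$ and hence not forbidden; the symmetric statement handles the case that all three lie in $C$. In particular, any $T$ with at least two vertices in $B$, or with at most one vertex in $A \setminus B$, or with at most one vertex in $C \setminus B$, is contained entirely in $A$ or in $C$. The remaining possibilities are: $T = \{b, a, c\}$ with $b \in B$, $a \in A \setminus B$, $c \in C \setminus B$ (one new edge); or $T$ has two vertices on one side of $A \setminus B$, $C \setminus B$ and one on the other (two new edges). In the one-new-edge case, the prioritised rule selected $\rel(a,c)$ precisely to ensure that $\rel(a,b)\rel(b,c)\rel(a,c) \notin S$ for every $b \in B$, so $T$ is not forbidden. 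In the two-new-edge case, $T$ has at least two edges in $\lan'$, and Condition \ref{condition1} forbids such a triangle from being in $S$.

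I do not anticipate a real obstacle: once the well-definedness is dispatched via the semi-free amalgamation class assumption, the argument reduces to a completely mechanical trichotomy, with Condition \ref{condition1} tailored exactly to kill the only case that the prioritised construction does not already handle by design.
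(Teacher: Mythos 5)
Your proposal is correct and follows essentially the same route as the paper: the paper dispatches the all-in-$A$, all-in-$C$, and one-vertex-in-$B$ cases via the remark after Definition \ref{priority}, and its entire written proof is your two-new-edge case, where both new edges carry colours from $\lan'$ and Condition \ref{condition1} rules out such a triangle being in $S$. The only difference is that you explicitly verify that the prioritised rule always finds some admissible $R_i\in\lan'$ (using the standing assumption that $Forb_c(S)$ is semi-free), a point the paper leaves implicit; this is a reasonable extra care rather than a divergence in method.
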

\begin{proof}
Suppose there is a forbidden triangle in $A \otimes _B C$. Then it is either of the form $acc'$ for some $a \in A \setminus B, c,c' \in C \setminus B$ or $aa'c$ for some $a,a' \in A \setminus B, c\in C \setminus B$.

Without loss of generality, we may assume $aa'c$ is a forbidden triangle in $A \otimes _B C$. Since $(a,c)$, $(a',c)$ are amalgamated by relations from $\lan '$, $\rel (a,c) ,\rel (a',c)  \in \lan '$. Then $\rel (a,c)\rel (a',c)\rel (a,a')$ is of the form $R_i R_j R' \in S$ where $R_i,R_j \in \lan '$ and $R' \in \lan$, which contradicts the condition \ref{condition1}. Therefore, we have $A \otimes _B C \in Forb_c(S)$.
\end{proof}

\begin{lem}
Let $S$ be a set of forbidden triangles satisfying Condition \ref{condition2}. For any $A,B,C \in Forb_c(S)$ such that $B \subseteq A,C$, let $A \otimes _B C$ be the amalgamation defined in Definition \ref{priority}. Then $A \otimes _B  C \in Forb_c(S)$.
\end{lem}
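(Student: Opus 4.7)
My plan is to argue by contradiction, showing that no forbidden triangle $\tau$ can arise in $A \otimes_B C$. Well-definedness of the amalgamation is automatic under our standing hypotheses: since $Forb_c(S)$ is a semi-free amalgamation class, for every cross-edge some colour from $\lan'$ avoids forbidden triangles with $B$, and the priority simply picks the smallest such colour. By the observation at the end of Section 2, $\tau$ has no vertex in $B$; since $A, C \in Forb_c(S)$ it must straddle both $A \setminus B$ and $C \setminus B$. The construction is symmetric in $A$ and $C$, so I will treat $\tau = aa'c$ with $a, a' \in A \setminus B$ and $c \in C \setminus B$; the other type is identical after swapping roles.

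I would first show that $\rel(a, c) = \rel(a', c) = R_2$. If instead $\rel(a, c) = R_1$, then $\tau$ involves $R_1$, so by Condition \ref{condition2} it must be a listed triangle $R_1 R^1 R^2$ with $R^1, R^2 \in \lan^\star$; but $\rel(a', c) \in \lan' \cap \lan^\star = \emptyset$, a contradiction. So both cross-edges are $R_2$, and $\tau$, containing $R_2 R_2$, must match the listed pattern $R_2 R_2 R^1$, forcing $\rel(a, a') \in \lan^\star$. Because $(a, c)$ was coloured $R_2$ rather than $R_1$, there exists $b_1 \in B$ with $R_1 \rel(a, b_1) \rel(b_1, c) \in S$; this forbidden triangle involves $R_1$, so it too is of the form $R_1 R^1 R^2$, giving $\rel(a, b_1), \rel(b_1, c) \in \lan^\star$.

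The crux is to inspect the triangle $a a' b_1 \subseteq A$, which cannot be forbidden. Two of its edges, $\rel(a, a')$ and $\rel(a, b_1)$, lie in $\lan^\star$; if the third edge $\rel(a', b_1)$ lay in $\lan \setminus (\hat{\lan} \cup \{R_2\})$, the triangle would match the listed pattern $R' R^1 R^2$ and hence be forbidden. Thus $\rel(a', b_1) \in \hat{\lan} \cup \{R_2\}$. In either subcase the triangle $R_2 \rel(a', b_1) \rel(b_1, c)$ lies in $S$ — matching $R_2 R_2 R^1$ when $\rel(a', b_1) = R_2$, and $R_2 \hat{R} R^1$ when $\rel(a', b_1) \in \hat{\lan}$. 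Consequently $b_1$ also blocks $R_2$ at $(a', c)$, contradicting $\rel(a', c) = R_2$. The only real delicacy is keeping track of the three classes $\lan'$, $\lan^\star$, and $\hat{\lan}$ and matching each triangle against the right listed pattern; once that bookkeeping is organised, each case is short.
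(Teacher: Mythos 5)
Your proof is correct and follows essentially the same route as the paper's: force both cross-edges of the offending triangle to be $R_2$ and $\rel(a,a')\in\lan^\star$, extract the witness $b\in B$ blocking $R_1$ at $(a,c)$ with both legs in $\lan^\star$, use the non-forbidden triangle $aa'b\subseteq A$ to pin $\rel(a',b)$ into $\hat{\lan}\cup\{R_2\}$, and conclude that $b$ would then have blocked $R_2$ at $(a',c)$. Your explicit remarks on well-definedness of $A\otimes_B C$ and on why the triangle must straddle $A\setminus B$ and $C\setminus B$ are points the paper leaves implicit, but the argument is the same.
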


\begin{proof}

Suppose there exists a forbidden triangle in $AC \setminus B$. As in the previous lemma, we may assume it is $aa'c$ for some $a,a' \in A \setminus B, c\in C \setminus B$ and $\rel (a,c) ,\rel (a',c)  \in \lan '$. Since the only forbidden triangle involving $R_i R_j R$ where $R_i,R_j \in \lan '$ and $R\in \lan$ is $R_2R_2R^1$ where $R ^1\in \lan ^{\star}$, we have $\rel (a,c)= \rel (a',c)= R_2$ and $\rel (a,a') \in \lan^{\star}$. This implies that there exists $b\in B$ that forbids $(a,c)$ to be completed by $R_1$.

\begin{center}
 \begin{tikzpicture}
    \tikzstyle{every node}=[draw,circle,fill=white,minimum size=4pt,
                            inner sep=0pt]
  \draw (0,0) node (1) [label=left:$a$] {}
     ++(315:4.32cm) node (2) [label=right:$c$]{} 
   ++(180:3cm) node (3)  [label=below:$b$] {}
   ++(120:2.58cm) node (4) [label=left:$a'$] {};

\draw  (4) -- (2)  node[draw=none,fill=none,font=\scriptsize,midway,right] {$R_2$} ;
\draw  (1) -- (2) node[draw=none,fill=none,font=\scriptsize,midway,above] {$R_2$} ;
\draw (1) -- (3)  node[draw=none,fill=none,font=\scriptsize,midway,right] {$\lan^{\star}$};
\draw (2) -- (3)  node[draw=none,fill=none,font=\scriptsize,midway,right] {$\lan^{\star}$};
\draw [thick,dash pattern={on 7pt off 2pt on 1pt off 3pt}] (4) -- (3)  node[draw=none,fill=none,font=\scriptsize,midway,right] {};
\draw (1) -- (4)  node[draw=none,fill=none,font=\scriptsize,midway,above] {$\lan^{\star}$};
\end{tikzpicture}
\end{center}

Since the only forbidden triangle containing $R_1$ is of the form $R_1R^1R^2$ where $R^1,R^2 \in \lan ^{\star}$, we have $ \rel (a,b), \rel  (b,c) \in \lan ^{\star}$. 

As $S$ contains all triangles of the form $R'R^1R^2$ where $  R^1,R^2 \in \lan ^{\star}, R' \in \lan \setminus (\hat{\lan} \cup \{R_2\})$, this forces $\rel (a',b)$ to be either $R_2$ or in $\hat{\lan}$ as $\rel (a,a'), \rel (a,b)$ $ \in \lan^{\star}$. If $\rel (a',b) \in \hat{\lan}$, $a'bc$ forms a forbidden triangle of the form $R_2 \hat{R} R^1$, where $R^1 \in \lan ^{\star}, \hat{R} \in \hat{\lan}$. If $\rel (a',b)=R_2$, $a'bc$ forms a forbidden triangle of the form $R_2 R_2 R^1$, where $R^1 \in \lan ^{\star}$. Hence, we cannot find a colour for $a'b$ without creating a forbidden triangle. Thus, $A \otimes _B C \in Forb_c(S)$.
\end{proof}


\begin{rmk}\label{conditionstar}

\begin{enumerate}

\item It can be seen in the proofs of the previous two lemmas that when $S$ satisfies Condition \ref{condition1}, the order on $\lan'$ does not affect whether $A \otimes _B C \in Forb_c (S)$. However for $S$ satisfying Condition \ref{condition2}, whether $A \otimes _B C \in Forb_c (S)$ is dependent on $R_1 >R_2$.

\item For $S$ satisfying Condition \ref{condition1} or Condition \ref{condition2}, $S$ does not contain triangle involving $R_1R_1$ or $R_1R_2$, hence it satisfies part (i) of Condition \ref{maincond}. We will show part (ii) of Condition \ref{maincond} in the next lemma.

\end{enumerate}

\end{rmk}

\begin{lem}\label{delete}
For $S$ satisfying Condition \ref{condition1} or Condition \ref{condition2}, let $a,b, c \in \M_S$ and $B \subseteq \M_S$ such that $a \ind_{bB} c$. If $\rel (a,b) \in \lan'$, we have $a \ind _B c$.
\end{lem}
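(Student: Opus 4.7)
The plan is to convert the hypothesis and conclusion into statements about the prioritised amalgamation recipe in Definition \ref{priority} and then do a direct combinatorial check on triangles. Write $\rel(a,c) = R_i$. The hypothesis $a \ind_{bB} c$ says precisely that $R_i$ is the highest-priority relation in $\lan'$ such that $\rel(a,x)\rel(x,c)R_i \notin S$ for every $x \in bB$. To obtain $a \ind_B c$, I need $R_i$ to remain the highest-priority such relation when the base is shrunk from $bB$ to $B$. Since $B \subseteq bB$, the condition over $B$ is weaker than the one over $bB$, so the only way the conclusion could fail is if there is some higher-priority $R_j$ (with $j < i$ and $R_j \in \lan'$) satisfying $\rel(a,x)\rel(x,c)R_j \notin S$ for every $x \in B$, yet with $\rel(a,b)\rel(b,c)R_j \in S$; in other words, the triangle that blocks the higher-priority choice over $bB$ sits on the single removed vertex $b$.

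Assuming this scenario for contradiction, the task reduces to ruling out a forbidden triangle on $\{a,b,c\}$ whose edge colours are $\rel(a,b), \rel(b,c), R_j$, with the extra information that both $\rel(a,b)$ and $R_j$ lie in $\lan'$. Under Condition \ref{condition1}, triangles of shape $R_iR_jR'$ with $R_i, R_j \in \lan'$ are explicitly excluded from $S$, and this closes the argument at once.

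Under Condition \ref{condition2} we have $\lan' = \{R_1, R_2\}$ with $R_1 > R_2$; since $R_i \in \lan'$ and $j < i$, the only possibility is $i = 2$ and $j = 1$, so the putative forbidden triangle is $\rel(a,b)\rel(b,c)R_1$. Condition \ref{condition2} restricts every forbidden triangle involving $R_1$ to the form $R'R^1R^2$ with $R' \in \lan \setminus (\hat{\lan} \cup \{R_2\})$ and $R^1, R^2 \in \lan^{\star}$. Since $\lan^{\star} \cap \lan' = \emptyset$, the colour $R_1$ cannot fill either the $R^1$ or the $R^2$ slot, so it must play the role of $R'$; but then $\rel(a,b)$ and $\rel(b,c)$ would both have to lie in $\lan^{\star}$, contradicting the hypothesis $\rel(a,b) \in \lan'$.

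The only step that needs any care is keeping the case analysis on which slot $R_1$ occupies in the forbidden triangle complete under Condition \ref{condition2}; beyond that the argument is a clean unwinding of the two definitions of independence followed by a direct appeal to the structural constraints that Conditions \ref{condition1} and \ref{condition2} impose on $S$.
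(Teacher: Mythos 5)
Your proposal is correct and follows essentially the same route as the paper: reduce the failure of $a \ind_B c$ to a single forbidden triangle $\rel(a,b)\rel(b,c)R_j$ with both $\rel(a,b)$ and $R_j$ in $\lan'$, then rule it out directly from Condition \ref{condition1}, and under Condition \ref{condition2} observe that the blocked colour must be $R_1$ and that no triangle of $S$ can contain $R_1$ together with another edge from $\lan'$. The only cosmetic difference is that you re-derive this last fact from the explicit shape $R'R^1R^2$ of the $R_1$-triangles, whereas the paper cites it as already recorded in Remark \ref{conditionstar}.
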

\begin{proof}
If $b \in B$, the statement is trivial. Let $b \notin B$. Since $a \ind _{bB} c$, we have $abBc=a \otimes _{bB} c$. To show that $a \ind_B c$, we want to show that  $(a,c)$ is coloured by the same relation in both $a \otimes _B c$ and $a \otimes _{bB} c$. Suppose on the contrary, $(a,c) \in R_i$ in $a \otimes _B c$ for some $R_i \in \lan'$ and $(a,c) \notin R_i$ in $a \otimes _{bB} c$, this implies that $R_i\rel (a,b) \rel (b,c)$ forms a forbidden triangle.

For $S$ satisfying Condition \ref{condition1}, since $\rel (a,b), R_i \in \lan '$, $R_i \rel (a,b) \rel (b,c)$ is of the form $R_iR_jR'$ where $R_i R_j \in \lan ', R' \in \lan$, contradicting Condition \ref{condition1}.

For $S$ satisfying Condition \ref{condition2}, since $\lan '=\{R_1,R_2\}$ with $R_1>R_2$, the only possibility of $(a,c) \notin R_i$ in $a\otimes _{bB} c$ is when $i=1$. Then $R_1 \rel (a,b) \rel (b,c)$ is a forbidden triangle in $S$. Since $\rel (a,b) \in \lan'$, $\rel(a,b) =R_1$ or $R_2$. However, $S$ does not contain any triangle involving $R_1R_1$ or $R_1R_2$ as noted in Remark \ref{conditionstar}.

Therefore, given the assumption, it can be deduced from $a \ind_{bB} c$ that $a \ind_{B} c$.
\end{proof}

Therefore, applying Theorem \ref{movealmost}, we obtain the following theorem:
\begin{thm}
Let $S$ be a set of forbidden triangles satisfying Condition \ref{condition1} or Condition \ref{condition2} and $\M_S$ be the Fra{\"\i}ss{\'e} limit of $Forb_c(S)$. Then for any non-trivial automorphism $g\in Aut(\M_S)$, there exist $k,h \in Aut(\M_S)$ such that $[k,[h,g]]$ moves almost maximally. Hence, $Aut(\M_S)$ is simple.
\end{thm}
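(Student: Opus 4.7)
The plan is to recognise this theorem as a packaging result: it asserts that the sufficient conditions introduced in Section 6.1 both deliver the two hypotheses of Theorem \ref{main}, after which the conclusion and the simplicity of $Aut(\M_S)$ follow from Theorem \ref{main} and Corollary \ref{maincoro} respectively. So the proof splits into two verification steps followed by a citation, and there is no genuinely new mathematical content to produce.

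First I would verify that $Forb_c(S)$ forms a prioritised semi-free amalgamation class in the sense of Definition \ref{priority}. This amounts to showing $A \otimes_B C \in Forb_c(S)$ for arbitrary $A, B, C \in Forb_c(S)$ with $B \subseteq A, C$. When $S$ satisfies Condition \ref{condition1}, this is exactly Lemma \ref{inforb}; when $S$ satisfies Condition \ref{condition2}, it is the unnamed lemma immediately following. In both cases the verification is already done, so one only needs to quote the appropriate result.

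Next I would verify Condition \ref{maincond}. Part (i), that $S$ contains no triangle involving $R_1R_1$ or $R_1R_2$, is explicitly recorded as the second item of Remark \ref{conditionstar} for both Condition \ref{condition1} and Condition \ref{condition2}; it also follows by direct inspection of the two conditions. Part (ii), that $a \ind_{bB} c$ together with $\rel(a,b) \in \lan'$ implies $a \ind_B c$, is precisely Lemma \ref{delete}. With both hypotheses of Theorem \ref{main} now in hand, I would invoke it to produce, for each non-trivial $g \in Aut(\M_S)$, the elements $k, h \in Aut(\M_S)$ such that $[k,[h,g]]$ moves almost maximally.

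Finally the simplicity claim is immediate from Corollary \ref{maincoro}: since $[k,[h,g]]$ moves almost maximally, Theorem \ref{tzmm} expresses every element of $Aut(\M_S)$ as a product of sixteen conjugates of $[k,[h,g]]$, and writing each such commutator as a product of four conjugates of $g^{\pm 1}$ yields the stated bound of sixty-four, so the normal closure of any non-trivial $g$ exhausts $Aut(\M_S)$. There is no genuine obstacle in this argument; the only point requiring slight care is ensuring that the hypotheses of Condition \ref{maincond} are stated in the same form as used by Theorem \ref{main} (in particular that Lemma \ref{delete} was proved for arbitrary $B$, including $b \in B$ as a trivial case), and this has already been arranged in Section 6.1.
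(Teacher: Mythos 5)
Your proposal is correct and follows exactly the paper's route: the paper likewise obtains this theorem by combining Lemma \ref{inforb} and its companion lemma for Condition \ref{condition2} (prioritised semi-free amalgamation), Remark \ref{conditionstar} and Lemma \ref{delete} (the two parts of Condition \ref{maincond}), and then quoting Theorem \ref{main}/Theorem \ref{movealmost} and Corollary \ref{maincoro}. Nothing is missing; the conjugate count of $64$ is also the one the paper records.
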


\subsection{The Remaining Case}

The remaining case is the following:

$\lan =\{ R,G,X,Y \}$

$\# 26$ \hspace{0.8cm} RRX  RXX  RYY  GYX GXX  YYX  XXX 

\vspace{0.3cm}

We will first show that in case $\# 26$, with $\lan'=\{ R,G,Y\}$, $Forb_c(S)$ forms a prioritised semi-free amalgamation class. In fact, $\{ R,G,Y\}$ is the only possible set of solutions for $Forb_c(S)$ to be a semi-free amalgamation class. Hence, we can find a stationary independence relation on $\M_S$ and finally we will prove the simplicity of the automorphism group of $\M_S$ with a similar approach, but with some extra conditions.

\begin{lem}
Let $S$ be as in $\# 26$ with the order $G>R>Y$ on $\lan'=\{ R,G,Y \}$. For any $A,B,C \in Forb_c(S)$ such that $B \subseteq A,C$, let $A \otimes _B C$ be the prioritised semi-free amalgamation defined in Definition \ref{priority}. Then $A \otimes _B  C \in Forb_c(S)$.
\end{lem}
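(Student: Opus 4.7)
The plan is to argue by contradiction: assume that $A\otimes_B C$ contains a triangle $T\in S$, and derive either a forbidden triangle already in $A$ (contradicting $A\in Forb_c(S)$) or a forbidden triangle with a vertex in $B$ inside $A\otimes_B C$ (contradicting the construction of $\otimes_B$).

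First I reduce to a standard form. The construction of $\otimes_B$ guarantees that no forbidden triangle of $A\otimes_B C$ has a vertex in $B$, so $T$ sits entirely in $(A\setminus B)\cup(C\setminus B)$; after swapping $A$ and $C$ if necessary, write $T=aa'c$ with $a,a'\in A\setminus B$ and $c\in C\setminus B$. The two amalgamated edges $\rel(a,c)$ and $\rel(a',c)$ are coloured from $\lan'=\{R,G,Y\}$, while $\rel(a,a')$ is inherited from $A$ and may be any relation in $\lan$. Scanning $S$, the only forbidden triangles with at least two edges in $\lan'$ are $RRX$, $RYY$, $GYX$ and $YYX$, so $T$ is one of these four, with up to three sub-configurations for $RYY$ according to which of its edges is inherited from $A$.

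The second ingredient is the translation of the priority order $G>R>Y$ into constraints on witnesses in $B$. If $\rel(u,c)=R$, then some $b_G\in B$ witnesses failure of $G$ at $(u,c)$, so $\{\rel(u,b_G),\rel(b_G,c)\}\in\{\{Y,X\},\{X,X\}\}$; meanwhile the success of $R$ at $(u,c)$ forbids $\{\rel(u,b),\rel(b,c)\}\in\{\{R,X\},\{X,X\},\{Y,Y\}\}$ for every $b\in B$, so in fact $\{\rel(u,b_G),\rel(b_G,c)\}=\{Y,X\}$. If $\rel(u,c)=Y$, then $G$- and $R$-witnesses both exist, and the success of $Y$ at $(u,c)$ rules out $\{\rel(u,b),\rel(b,c)\}\in\{\{R,Y\},\{G,X\},\{Y,X\}\}$ for every $b\in B$; combined with the $G$-witness constraint, this forces $\{\rel(u,b_G),\rel(b_G,c)\}=\{X,X\}$.

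Equipped with this, I run a uniform case analysis on $T$: pick the $G$-witness $b_G$ for one amalgamated edge (choosing the $Y$-coloured edge when available, since it yields the tightest information), pin down $\rel(\cdot,b_G)$ inside $A$ using $\rel(a,a')$ and $A\in Forb_c(S)$, and then examine the triangle on $\{a',b_G,c\}$ or $\{a,b_G,c\}$, which will violate either the success of the chosen colour at the other amalgamated edge or the forbidden list $S$. For $T=YYX$ for example, $b_G$ at $(a,c)$ gives $\rel(a,b_G)=\rel(b_G,c)=X$; the triangle $aa'b_G\subseteq A$ with $\rel(a,a')=\rel(a,b_G)=X$ then forces $\rel(a',b_G)=Y$, since the alternatives $R,G,X$ would yield $RXX,GXX,XXX$ in $A$. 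But then $a'b_Gc$ carries colours $Y,Y,X$, producing the forbidden triangle $YYX$ with $b_G\in B$, contradicting that $Y$ succeeded at $(a',c)$. The cases $RRX$, $GYX$ and the three subcases of $RYY$ follow by exactly the same template (for instance, in the $RRX$ case one obtains $\rel(a,b_G)=Y,\rel(b_G,c)=X$ and is forced into $\rel(a',b_G)\in\{G,Y\}$, which turns $aa'b_G$ into $GYX$ or $YYX$ inside $A$). The main obstacle is simply the bookkeeping: case \#26 has three priority levels and seven forbidden triangles, so the number of sub-configurations is larger than in the earlier lemmas, but each subcase closes in two or three short steps of the above form.
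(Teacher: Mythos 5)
Your proposal is correct and follows essentially the same route as the paper's proof: reduce to a triangle $aa'c$ with two $\lan'$-coloured amalgamated edges, enumerate the five possible configurations ($RRX$, $YYR$, $YYX$, $YRY$, $YGX$), and in each use the $B$-witness for the failure of $G$ at one amalgamated edge to show that no value of $\rel(a',b_G)$ is consistent with both $A\in Forb_c(S)$ and the success of the chosen colour at the other amalgamated edge. The only point to tighten is your $RRX$ aside: the witness constraint gives $\{\rel(a,b_G),\rel(b_G,c)\}=\{Y,X\}$ in either order, and the order $X,Y$ needs its own (equally short) run of your template, exactly as the paper's Case I does.
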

\begin{proof}
Suppose there exists a forbidden triangle in $A \otimes _B C$. As a forbidden triangle can only appear in $AC \setminus B$ by construction, we may assume $aa'c$ is a forbidden triangle in $A \otimes _B C$ for some $a,a' \in A \setminus B, c \in C \setminus B$. Since $\rel (a,c), \rel (a',c) \in \lan'$, these are not $X$ and hence, there are five possible cases for  $\rel (a,c) \rel (a',c)\rel (a,a')$, as listed below. In each case, since $\rel (a,c) \neq G$, there exists $b\in B$ that forbids $(a,c)$ to be coloured by $G$. $(a',b)$ is the edge where we cannot find any relation and hence we have a contradiction.

\begin{center}
 \begin{tikzpicture}
    \tikzstyle{every node}=[draw,circle,fill=white,minimum size=4pt,
                            inner sep=0pt]
  \draw (0,0) node (1) [label=left:$a$] {}
     ++(315:4.32cm) node (2) [label=right:$c$]{} 
   ++(180:3cm) node (3)  [label=below:$b$] {}
   ++(120:2.58cm) node (4) [label=left:$a'$] {};

\draw (4) -- (2)  node[draw=none,fill=none,font=\scriptsize,midway,right]  {};
\draw (1) -- (2) node[draw=none,fill=none,font=\scriptsize,midway,above] {} ;
\draw (1) -- (3)  node[draw=none,fill=none,font=\scriptsize,midway,right] {};
\draw (2) -- (3)  node[draw=none,fill=none,font=\scriptsize,midway,below] {};
\draw [thick,dash pattern={on 7pt off 2pt on 1pt off 3pt}] (4) -- (3)  node[draw=none,fill=none,font=\scriptsize,midway,right] {};
\draw (1) -- (4)  node[draw=none,fill=none,font=\scriptsize,midway,above] {};
\end{tikzpicture}
\end{center}

\begin{enumerate}[I.]
  \item $\rel (a,c) \rel (a',c)\rel (a,a')=RRX$.

Since $\rel (a,c) =R$, either $\rel (b,a) \rel (b,c)=YX$ or $\rel (b,a) \rel (b,c)=XY$. However, in the first case, $\rel (a,a') \rel (a,b)=XY$ forbids $\rel( a',b)$ to be $Y,G,X$ and $\rel (c,a') \rel (c,b)=RX$ forbids $\rel( a',b)$ to be $R,X$. Therefore, we cannot find a colour for $(a',b)$ without creating a forbidden triangle. In the second case, $\rel (a,a') \rel (a,b)=XX$ forbids $\rel( a',b)$ to be $R,G,X$ and $\rel (c,a') \rel (c,b)=RY$ forbids $\rel( a',b)$ to be $Y$. Hence, in both case, there would exist a forbidden triangle.

  \item $\rel (a,c) \rel (a',c)\rel (a,a')=YYR$

Since $\rel(a,c)=Y$, $\rel (b,a) \rel (b,c)=XX$. However, $\rel (a,a') \rel (a,b)=RX$ forbids $\rel( a',b)$ to be $R,X$ and $\rel (c,a') \rel (c,b)=YX$ forbids $\rel( a',b)$ to be $Y,G,X$, a contradiction.

  \item $\rel (a,c) \rel (a',c)\rel (a,a')=YYX$

Similarly as in the previous cases, $\rel (b,a) \rel (b,c)=XX$. However, $\rel (a,a') \rel (a,b)=XX$ forbids $\rel( a',b)$ to be $G,R,X$ and $\rel (c,a') \rel (c,b)=YX$ forbids $\rel( a',b)$ to be $Y,G,X$, a contradiction.

 \item $\rel (a,c) \rel (a',c)\rel (a,a')=YRY$
 We have $\rel (b,a) \rel (b,c)=XX$. However, $\rel (a,a') \rel (a,b)=YX$ forbids $\rel( a',b)$ to be $G,Y$ and $\rel (c,a') \rel (c,b)=RX$ forbids $\rel( a',b)$ to be $R,X$, a contradiction.

  \item $\rel (a,c) \rel (a',c)\rel (a,a')=YGX$

In this case, we have $\rel (a,b) \rel (b,c)=XX$. However, $\rel (a,a') \rel (a, b)=XX$ forbids $\rel (a',b)$ to be $R,G,X$ and $\rel (a',c) \rel (c, b)=GX$ forbids $\rel (a',b)$ to be $Y$, a contradiction.

\end{enumerate}

\end{proof}

As noted earlier, there appears to be a claim in \cite{cherlin1998classification} that a solution with 2 colours gives a semi-free amalgamation class for $\# 26$. This is incorrect as we can prove the following:

\begin{lem}\label{no26semifree}

Let $S$ be as in $\# 26$ and $Forb_c(S)$ be the set of all complete $\lan$-structures that do not embed any triangle in $S$. Then $\{ R,G,Y\}$ is the only possible set of soltions for $Forb_c(S)$ to be a semi-free amalgamation class.
\end{lem}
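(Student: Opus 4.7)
The plan is as follows. By the previous lemma, $\{R,G,Y\}$ is a set of solutions, so for uniqueness it suffices to show that any set of solutions $\mathcal{L}'$ for $Forb_c(S)$ must contain each of $R$, $G$, and $Y$. Since $\mathcal{L}' \subsetneq \mathcal{L} = \{R,G,X,Y\}$, this forces $\mathcal{L}' = \{R,G,Y\}$. I will establish the three containments by exhibiting, for each colour $c_0 \in \{R,G,Y\}$, a concrete amalgamation instance $A, B, C \in Forb_c(S)$ with $|A \setminus B| = |C \setminus B| = 1$, say $a \in A \setminus B$ and $\tilde{c} \in C \setminus B$, in which the only admissible value of $\rel(a, \tilde{c})$ is $c_0$. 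Because any set of solutions must accommodate this amalgamation, $c_0$ must belong to $\mathcal{L}'$.

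The forcing configurations I propose are chosen by inspecting which pairs $\{\rel(a,b), \rel(b, \tilde c)\}$ exclude which colours via the list $RRX, RXX, RYY, GYX, GXX, YYX, XXX$. To force $Y$, I take $B = \{b\}$ with $\rel(a,b) = \rel(b,\tilde c) = X$; then $X, R, G$ are all ruled out (via $XXX$, $RXX$, $GXX$), while $Y$ is admissible because $YXX \notin S$. To force $R$, I take $B = \{b_1, b_2\}$ with $\rel(a,b_1) = Y$, $\rel(b_1,\tilde c) = X$ (which rules out $G$ by $GYX$ and $Y$ by $YYX$), and $\rel(a,b_2) = \rel(b_2,\tilde c) = R$ (which rules out $X$ by $RRX$). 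To force $G$, I take $B = \{b_1, b_2\}$ with $\rel(a,b_1) = \rel(b_1,\tilde c) = Y$ (ruling out $R$ via $RYY$ and $X$ via $YYX$), together with $\rel(a,b_2) = R$, $\rel(b_2,\tilde c) = Y$ (ruling out $Y$ via $RYY$).

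The main thing to verify, and the only real obstacle, is that each of these configurations genuinely lies in $Forb_c(S)$: one must choose $\rel(b_1, b_2)$ in the two-point-base cases so that no forbidden triangle is created among $\{a, b_1, b_2\}$ or $\{b_1, b_2, \tilde c\}$. In both constructions I expect $\rel(b_1, b_2) = G$ to work, since this avoids the triangles involving $RR$, $YY$, and $X$-heavy combinations that appear in $S$; this can be checked directly against the seven forbidden triangles. Once consistency is confirmed, the three explicit amalgamations together show that $\{R, G, Y\} \subseteq \mathcal{L}'$, and the conclusion follows.
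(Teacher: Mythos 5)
Your proposal is correct and follows essentially the same route as the paper: exhibit, for each of $R$, $G$, $Y$, a concrete one-point-over-$B$ amalgamation in which that colour is the unique admissible value of $\rel(a,\tilde c)$, then conclude $\{R,G,Y\}\subseteq \lan'$ and hence equality since $\lan'\subsetneq\lan$. Your configurations for $Y$ and $R$ coincide with the paper's, and your alternative $G$-forcing configuration (using $RY$ at $b_2$ to exclude $Y$ via $RYY$, with $\rel(b_1,b_2)=G$) checks out against the seven forbidden triangles just as the paper's does.
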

\begin{proof}
Since in the previous lemma, we showed that with $\lan '= \{ R,G,Y\}$, $Forb_c(S)$ forms a prioritised semi-free amalgamation class, it remains to show that $\lan'$ has to contain $G,R,Y$ by finding amalgamations where each of them is the only choice.

In the following amalgamation, $(a,c)$ has to be coloured by $G$ as $YY$ forbids it to be coloured $R$ or $X$ and $GX$ forbids it to be coloured $Y$. Hence $G \in \lan'$.
\begin{center}
 \begin{tikzpicture}
    \tikzstyle{every node}=[draw,circle,fill=white,minimum size=4pt,
                            inner sep=0pt]
  \draw (0,0) node (1) [label=left:$a$] {}
     ++(315:2.88cm) node (2) [label=right:$c$]{} 
   ++(180:2cm) node (3) {}
     ++ (240:1cm) node (4) {};

\draw [thick,dash pattern={on 7pt off 2pt on 1pt off 3pt}] (1) -- (2) ;
\draw (1) -- (3)  node[draw=none,fill=none,font=\scriptsize,midway,right] {Y};
\draw (2) -- (3)  node[draw=none,fill=none,font=\scriptsize,midway,above] {Y};
\draw (4) -- (3)  node[draw=none,fill=none,font=\scriptsize,midway,right] {R};
\draw (1) -- (4)  node[draw=none,fill=none,font=\scriptsize,midway,left] {G};
\draw (2) -- (4)  node[draw=none,fill=none,font=\scriptsize,midway,below] {X};

\end{tikzpicture}
\end{center}

Similarly, in the following amalgamation, $(a,c) \in R$ as $XY$ forbids it to be coloured $G$ or $Y$ and $RR$ forbids it to be coloured $X$. Hence $R \in \lan'$
\begin{center}
 \begin{tikzpicture}
    \tikzstyle{every node}=[draw,circle,fill=white,minimum size=4pt,
                            inner sep=0pt]
  \draw (0,0) node (1) [label=left:$a$] {}
     ++(315:2.88cm) node (2) [label=right:$c$]{} 
   ++(180:2cm) node (3) {}
     ++ (240:1cm) node (4) {};

\draw [thick,dash pattern={on 7pt off 2pt on 1pt off 3pt}] (1) -- (2) ;
\draw (1) -- (3)  node[draw=none,fill=none,font=\scriptsize,midway,right] {Y};
\draw (2) -- (3)  node[draw=none,fill=none,font=\scriptsize,midway,above] {X};
\draw (4) -- (3)  node[draw=none,fill=none,font=\scriptsize,midway,right] {G};
\draw (1) -- (4)  node[draw=none,fill=none,font=\scriptsize,midway,left] {R};
\draw (2) -- (4)  node[draw=none,fill=none,font=\scriptsize,midway,below] {R};

\end{tikzpicture}
\end{center}

The following amalgamation implies $Y \in \lan'$ since $XX$ forbids $(a,c)$ to be coloured $R$, $G$ or $X$. Hence $Y \in \lan'$.
\begin{center}
 \begin{tikzpicture}
    \tikzstyle{every node}=[draw,circle,fill=white,minimum size=4pt,
                            inner sep=0pt]
  \draw (0,0) node (1) [label=left:$a$] {}
     ++(315:2.88cm) node (2) [label=right:$c$]{} 
   ++(180:2cm) node (3) {};

\draw [thick,dash pattern={on 7pt off 2pt on 1pt off 3pt}] (1) -- (2) ;
\draw (1) -- (3)  node[draw=none,fill=none,font=\scriptsize,midway,left] {X};
\draw (2) -- (3)  node[draw=none,fill=none,font=\scriptsize,midway,below] {X};

\end{tikzpicture}
\end{center}

Therefore, we have shown that $\{ R, G, Y\} \subseteq \lan'$. 

\end{proof}

\begin{rmk}\label{deletenotholdfor26}
With order $G>R>Y$ on $\lan' =\{ G,R,Y\}$, part (i) of Condition \ref{maincond} holds, but part (ii) does not as in the following example, we have $a \ind _{b b'} c $ and $\rel (a,b) \in \lan'$, but $a$ is not independent from $c$ over $b'$.
\begin{center}
 \begin{tikzpicture}
    \tikzstyle{every node}=[draw,circle,fill=white,minimum size=4pt,
                            inner sep=0pt]
  \draw (0,0) node (1) [label=left:$a$] {}
     ++(0:2.88cm) node (2) [label=right:$c$]{} 
   ++(225:2.78cm) node (3) [label=below:$b$] {}
     ++ (0:1cm) node (4) [label=below:$b'$] {};

\draw (1) -- (2)  node[draw=none,fill=none,font=\scriptsize,midway,above] {$R$} ;
\draw (1) -- (3)  node[draw=none,fill=none,font=\scriptsize,midway,left] {$Y$};
\draw (2) -- (3)  node[draw=none,fill=none,font=\scriptsize,midway,left] {$X$};
\draw (4) -- (3)  node[draw=none,fill=none,font=\scriptsize,midway,above] {$G$};
\draw (1) -- (4)  node[draw=none,fill=none,font=\scriptsize,midway,right] {$R$};
\draw (2) -- (4)  node[draw=none,fill=none,font=\scriptsize,midway,right] {$R$};
\end{tikzpicture}
\end{center}
However, we can prove a similar version, namely, for $a \ind _{bB} c$ where both $\rel (a,b), \rel (b,c) \in \lan '$, we have $a \ind _B c$, proved in the following lemma. We will then show that the proof of our main result, Corollary \ref{maincoro} follows similarly.
\end{rmk}

\begin{lem}\label{deletefor26}
Let $S$ be as in $\# 26$. Let $a,b,c \in \M_S$ and $B \subseteq \M_S$ such that $\rel (a,b), \rel (b,c) \in \lan'$ and $a \ind_{bB} c$. Then $a \ind _B c$.
\end{lem}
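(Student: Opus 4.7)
The plan is to adapt the argument of Lemma \ref{delete} to the setting of $\#26$, crucially exploiting the stronger hypothesis that \emph{both} $\rel(a,b)$ and $\rel(b,c)$ lie in $\lan'=\{R,G,Y\}$. Remark \ref{deletenotholdfor26} shows this symmetry is essential: the weaker version with only $\rel(a,b)\in\lan'$ genuinely fails in $\#26$. If $b\in B$ there is nothing to prove, so assume $b\notin B$ and suppose for contradiction that $a\not\ind_B c$. Then $(a,c)$ receives different relations in $a\otimes_B c$ and $a\otimes_{bB}c$; call them $R_j$ and $R_i$ respectively. Since $B\subseteq bB$ imposes fewer constraints on the greedy algorithm, $j<i$ in the ordering $G>R>Y$ (i.e.\ $R_1=G, R_2=R, R_3=Y$), and the witness to the discrepancy must be $b$ itself, meaning $R_j\,\rel(a,b)\,\rel(b,c)\in S$.

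The combinatorial heart of the argument is an inspection of the seven forbidden triangles $RRX$, $RXX$, $RYY$, $GYX$, $GXX$, $YYX$, $XXX$: the only one whose three colours all lie in $\lan'$ is $RYY$. Since all of $R_j$, $\rel(a,b)$, $\rel(b,c)$ are in $\lan'$, the triangle $R_j\,\rel(a,b)\,\rel(b,c)$ must be $RYY$ as a multiset. The constraint $j<i\le 3$ rules out $R_j=Y$ (which would force $i>3$), so $R_j=R$, $\rel(a,b)=\rel(b,c)=Y$, and $R_i=Y$.

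It remains to derive a contradiction from the fact that the greedy colour via $B$ is $R$, not $G$. This says there is some $b'\in B$ with $G\,\rel(a,b')\,\rel(b',c)\in S$; since the $G$-containing forbidden triangles are $GYX$ and $GXX$, the unordered pair $\{\rel(a,b'),\rel(b',c)\}$ is either $\{Y,X\}$ or $\{X,X\}$. In the first sub-case the triangle $Y\,\rel(a,b')\,\rel(b',c)$ is $YYX\in S$ (a case-check handles both orderings of $(Y,X)$ by symmetry of the relations), contradicting the fact that $Y$ was the successful colour over $bB\supseteq\{b'\}$. In the second sub-case, $R\,\rel(a,b')\,\rel(b',c)=RXX\in S$, contradicting the fact that $R$ works over $B$. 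Either way we reach a contradiction, so $a\ind_B c$.

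The main obstacle, as I see it, is purely the bookkeeping of the case split: one must simultaneously track the priorities of three colours and verify that both sub-cases of the $G$-failure analysis really are closed off using only the two colours $R$ and $Y$ one already has control over, rather than needing new information about $b'$. The reason this works — and the reason the extra hypothesis $\rel(b,c)\in\lan'$ is doing the heavy lifting — is that it pins down $\rel(b,c)=Y$, which then meshes precisely with the forbidden triangle $YYX$ to close the first sub-case.
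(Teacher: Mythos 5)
Your proof is correct. It takes a somewhat different route from the paper's. The paper argues forwards, splitting into three cases according to whether the greedy colour of $(a,c)$ over $B$ is $G$, $R$ or $Y$, and checking in each case that the colour over $bB$ agrees: all $G$-containing forbidden triangles contain $X$, so the $G$ case is immediate from $\rel(a,b),\rel(b,c)\in\lan'$; in the $R$ and $Y$ cases it analyses the path through the witness $b'\in B$ that blocks $G$ (an $XX$ path would also block $R$, an $XY$ path blocks both $G$ and $Y$, forcing agreement). You instead argue by contradiction and make the single observation that the blocking triangle at $b$ has all three colours in $\lan'$, and that $RYY$ is the unique forbidden triangle of $\#26$ with that property; this pins down $\rel(a,b)=\rel(b,c)=Y$, the colour over $B$ as $R$ and the colour over $bB$ as $Y$, after which the $G$-blocking witness $b'\in B$ yields a contradiction via $YYX$ or $RXX$. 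Both arguments rest on the same inspection of the seven triangles, but your organisation localises precisely where the extra hypothesis $\rel(b,c)\in\lan'$ is needed (to exclude every blocking triangle other than $RYY$), and it sidesteps a point the paper leaves implicit in its $R$ case, namely that $b$ itself cannot block $R$ over $bB$ via $RYY$ (in your setup that possibility is exactly the configuration you are refuting). The one step worth spelling out a little more is that $R_i=Y$ is the actual colour of $(a,c)$ in $\M_S$ because $a\ind_{bB}c$; that is what makes $ab'c$ a genuine forbidden $YYX$ in your first sub-case, and your phrasing in terms of $Y$ being ``the successful colour over $bB$'' is an equivalent and adequate way of saying this.
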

\begin{proof}
If $b\in B$, the statement is trivial. Let $b \notin B$. As in Lemma \ref{delete}, to show that $a\ind_B c$, we want to show that $(a,c)$ is coloured by the same relation in both $aB \otimes _B cB$ and $abB \otimes _{bB} cbB$.

If $\rel (a,c)=G$ in $aB \otimes _B cB$ and $\rel (a,c) \neq G$ in $aB \otimes _{bB} cB$, then $G \rel (a,b) \rel (b,c)$ forms a forbidden triangle where $\rel(a,b), \rel (b,c) \in \lan'$, i.e. $\rel (a,b), \rel(b,c) \neq X$, by assumption. However, all forbidden triangles in $S$ involving $G$ also contains $X$, a contradiction.

If $\rel (a,c)=R$ in $aB \otimes _B cB$, then there exists $b' \in B$ such that $b'$ forbids $(a,c)$ to be coloured by $G$. Since $XX$ forbids both $G$ and $R$, $\rel (a,b') \rel (b',c) =XY$ or $YX$. However, $XY$ forbids $(a,c)$ to be coloured by $G$ or $Y$ and hence makes $R$ the only choice for $\rel (a,c)$ in $aB \otimes _{bB} cB$. The same argument works for when $\rel (a,c)=Y$ since $XX$ forbids both $G,R$.

Therefore, given the assumption, it can be deduced from $a \ind_{bB} c$ that $a \ind_{B} c$.
\end{proof}


Note that only Theorem \ref{setequal} and \ref{movealmost} depend on part (ii) of Condition \ref{maincond}. Theorem \ref{setequal} holds for \# 26 since in the proof, $tp(c/c'B)=tp(c''/c'B)$ and then we have $\rel (c,c')=\rel(c'',c') \in \lan'$. So, we again have $a \ind _B c$ from $a \ind _{bB} c$ by Lemma \ref{deletefor26} and the rest of the proof follows. 

For Theorem \ref{movealmost}, let $g \in Aut(\M_S)$ be a non-trivial automorphism of $\M_S$. As Lemma \ref{colourrange} only depends on $Forb_c(S)$ being a semi-free amalgamation class, we can construct $h\in Aut(\M_S)$ such that for any non-algebraic 1-type $p$ over some finite set, there exist infinitely many realisations $a$ of $p$ such that $\rel (a, [h,g]a) \in \lan '$. We want to show that there exists $k \in Aut(\M_S)$ such that $[k,[h,g]]$ moves almost maximally. The proof is mostly the same as the proof of Theorem \ref{movealmost} except in Equation (\ref{bindinv}), we choose $b$ also satisfying $\rel (b,gb) \in \lan'$. Then $\rel (a,c) =\rel (ka, kc) =\rel (b,gb) \in \lan'$. Thus, by the previous lemma, we again have $c \ind _A ga$ from $c \ind_{aA} ga$. The rest of the proof of Theorem \ref{movealmost} follows, which proves the following corollary:

\begin{coro}
Let $S$ be as in $\# 26$ and $\M_S$ be the Fra{\"\i}ss{\'e} limit of $Forb_c(S)$. Then for any non-trivial automorphism $g\in Aut(\M_S)$, there exist $k,h \in Aut(\M_S)$ such that $[k,[h,g]]$ moves almost maximally. Hence, $Aut(\M_S)$ is simple.
\end{coro}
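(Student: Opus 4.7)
The plan is to reduce the corollary to the framework already developed in Sections 4--5, with Lemma \ref{deletefor26} playing the role of Condition \ref{maincond}(ii). Since Theorem \ref{tzmm} is available, once we produce $k,h\in Aut(\M_S)$ with $[k,[h,g]]$ moving almost maximally, the simplicity of $Aut(\M_S)$ (and the explicit bound of $64$ conjugates) follows immediately. The substantive task is therefore to revisit the two places where part (ii) of Condition \ref{maincond} was used, namely the proofs of Theorem \ref{setequal} and Theorem \ref{movealmost}, and check that in each application the stronger hypothesis of Lemma \ref{deletefor26} is available, i.e.\ that \emph{both} relations appearing at the extra base point lie in $\lan'=\{R,G,Y\}$.

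First I would redo Theorem \ref{setequal} for $\#26$. Given finite $B\subseteq\M_S$ and $a,c\in\M_S$, I would pick $c'$ realising $tp(c/aB)$ with $c'\ind_{aB}c$, and then $c''$ realising $tp(c/c'B)$ with $c''\ind_{c'B}c$, exactly as before. The original argument removed $c'$ from the base using Condition \ref{maincond}(ii); here I want to apply Lemma \ref{deletefor26} to $c''\ind_{c'B}c$ to obtain $c''\ind_B c$. The two relations I must check lie in $\lan'$ are $\rel(c'',c')$ and $\rel(c',c)$. The second is automatic from $c'\ind_{aB}c$, since the amalgamation colouring takes values in $\lan'$. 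For the first, $tp(c''/c'B)=tp(c/c'B)$ forces $\rel(c'',c')=\rel(c,c')$, and we just saw that this is in $\lan'$. With $c''\ind_B c$ in hand, the remainder of the proof of Theorem \ref{setequal} (and hence Corollary \ref{fixsetwise} and Proposition \ref{nofix}) goes through verbatim.

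Next I would redo Theorem \ref{movealmost}. Given $g\in Aut(\M_S)$ nontrivial, Lemma \ref{colourrange} (which only needs semi-free amalgamation) produces $h\in Aut(\M_S)$ such that $g_1:=[h,g]$ has the property that every nonalgebraic $1$-type over a finite set admits infinitely many realisations $a$ with $\rel(a,g_1a)\in\lan'$. In the back-and-forth construction of $k$, at the stage where Theorem \ref{movealmost} picks $b$ realising $\tilde{k}\cdot tp(a/A)$ with $b\ind_B g_1^{-1}B$, I would further demand $\rel(b,g_1b)\in\lan'$, which is possible because the set of such realisations is cofinal in the type by the assumption on $g_1$, and the existence axiom lets us combine this with the independence requirement. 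Then when I later pick $c$ realising $\tilde{k}^{-1}\cdot tp(g_1b/bB)$ with $c\ind_{aA}g_1a$, transporting by $\tilde{k}$ gives $\rel(c,a)=\rel(g_1b,b)\in\lan'$; together with $\rel(a,g_1a)\in\lan'$ the hypothesis of Lemma \ref{deletefor26} is met (with $a$ in the role of the extra base element), so I may delete $a$ from the base and conclude $c\ind_A g_1a$. The remaining transitivity/invariance steps are unchanged, yielding $a\ind_X[k,g_1]a$.

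The main obstacle, and the reason the case requires extra care, is precisely the synchronised choice in the preceding paragraph: I must choose $b$ to satisfy two independence/amalgamation conditions at once (independence from $g_1^{-1}B$ over $B$ and $\lan'$-distance from $g_1b$), where in the original argument only one was needed. This is what makes Lemma \ref{colourrange} essential --- without it, I could not guarantee that infinitely many realisations of the target type are moved into $\lan'$-distance by $g_1$, and the simultaneous selection would fail. Once this is secured, the full conclusion of Theorem \ref{main} holds for $\#26$, and combining with Theorem \ref{tzmm} gives $Aut(\M_S)$ simple with every element a product of $64$ conjugates of $g^{\pm1}$.
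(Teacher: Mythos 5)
Your proposal is correct and follows essentially the same route as the paper: you identify Theorem \ref{setequal} and Theorem \ref{movealmost} as the only places where Condition \ref{maincond}(ii) enters, verify in the first that $\rel(c'',c')=\rel(c,c')\in\lan'$ and $\rel(c',c)\in\lan'$ so that Lemma \ref{deletefor26} applies, and in the second add the extra requirement $\rel(b,g_1b)\in\lan'$ on the chosen realisation so that $\rel(c,a)=\rel(g_1b,b)\in\lan'$ together with $\rel(a,g_1a)\in\lan'$ licenses the deletion of $a$ from the base. This is exactly the paper's argument, carried out at the same level of detail.
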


Combining the results in this section, we then have shown Theorem \ref{maincherlin}.
\vspace{1cm}

\printbibliography

\end{document}